\def \[{\begin{equation}}
	\def \]{\end{equation}}
\newtheorem{theorem}{Theorem}[section]
\newtheorem{corollary}{Corollary}[section]
\newtheorem{lemma}{Lemma}[section]
\newtheorem{definition}{Definition}[section]
\newtheorem{remark}{Remark}[section]
\newtheorem{example}{Example}[section]
\numberwithin{equation}{section}
\title{An inverse-free fixed-time stable dynamical system and its forward-Euler discretization for solving generalized absolute value equations}
\author[a]{Xuehua Li\thanks{Email address: 3222714384@qq.com.}}
\author[b]{Linjie Chen\thanks{Supported partially by the Fujian Social Science Foundation (No. FJ2024BF057). Email address: clj@fjnu.edu.cn.}}
\author[c]{Dongmei Yu\thanks{Supported partially by the National Natural Science Foundation of China (No. 12201275), the Natural Science Foundation of Liaoning Province (No. 2024-MS-206) and the Liaoning Provincial Department of Education (Nos. JYTZD2023072, LJ112410147046, LJ242410147027). Email: yudongmei1113@163.com.}}
\author[b]{Cairong Chen\thanks{Corresponding author. Supported partially by the Natural Science Foundation of Fujian Province (No. 2025J01673) and the Fujian Alliance of Mathematics (No. 2023SXLMQN03). Email address: cairongchen@fjnu.edu.cn.}}
\author[d]{Deren Han\thanks{Supported partially by the National Natural Science Foundation of China (No. 12131004) and the Ministry of Science and Technology of China (No. 2021YFA1003600). Email address: handr@buaa.edu.cn.}}
\affil[a]{School of Mathematics and Computational Science, Xiangtan University, Xiangtan, 411100, P.R. China}
\affil[b]{School of Mathematics and Statistics, Fujian Normal University, Fuzhou, 350117, P.R. China}
\affil[c]{Institute for Optimization and Decision Analytics, Liaoning Technical University, Fuxin, 123000, P.R. China.}
\affil[d]{School of Mathematical Sciences, Beihang University, Beijing, 100191, P.R. China}
\begin{document}
\date{\today}
\maketitle

\begin{abstract}
An inverse-free dynamical system is proposed to solve the generalized absolute value equation (GAVE) with a fixed time convergence, where the time of convergence is finite and is uniformly bounded for all initial points. Moreover, an iterative method obtained by using the forward-Euler discretization of the proposed dynamic model is developed and sufficient conditions which guarantee that the discrete iteration globally converge
to an arbitrarily small neighborhood of the unique solution of GAVE within a finite number of iterative steps are given. Numerical results illustrate the effectiveness of the proposed methods.\\
{\bf Keyword:} Generalized absolute value equations; Dynamic system; Fixed-time stability; Forward-Euler discretization; Finite termination.
\end{abstract}

\section{Introduction}\label{sec:intro}

Consider the generalized absolute value equation~(GAVE)
\begin{equation}\label{eq:gave}
  Ax - B|x|=c,
\end{equation}
where~$A,B\in\mathbb{R}^{m\times n}$ and $c\in\mathbb{R}^m$ are known, and $x\in\mathbb{R}^n$ is the unknown vector. Here, $|x|=[|x_1|,|x_2|,\ldots,|x_n|]^\top$. If $B$ is invertible, then GAVE~\eqref{eq:gave} can turn into
\begin{equation}\label{eq:ave}
Ax-|x|=c,
\end{equation}
which is the so-called absolute value equation (AVE). Due to the existence of the absolute value term $|x|$, solving GAVE~\eqref{eq:gave} is generally NP-hard~\cite{mang2007a}.

The linear complementarity problem (LCP) \cite{cops1992} is a well-known problem in mathematical programming. Recall that LCP is to find a $z\in \mathbb{R}^\ell$ such that
\begin{equation}\label{eq:lcp}
w = Mz + q\ge 0, \quad z\ge 0,\quad w^\top z = 0,
\end{equation}
where $M\in \mathbb{R}^{\ell\times \ell}$ and $q\in \mathbb{R}^\ell$ are given. LCP~\eqref{eq:lcp} is a special case of the following horizontal linear complementarity problem (HLCP): find two vectors $z,w\in \mathbb{R}^{\ell}$  such that
\begin{equation}\label{eq:hlcp}
P z - Qw = p, \quad z\ge 0,\quad w\ge 0, \quad w^\top z = 0,
\end{equation}
where $P,Q\in \mathbb{R}^{\ell\times \ell}$ and $p\in \mathbb{R}^{\ell}$ are known. In addition, LCP~\eqref{eq:lcp} is also a special case of the following generalized linear complementarity problem (GLCP): find an  $x\in \mathbb{R}^{\ell}$ such that
\begin{equation*}\label{eq:glcp}
Ex + e \ge 0, \quad Fx + f\ge 0,\quad (Ex+e)^\top (Fx + f)= 0,
\end{equation*}
where $E,F\in \mathbb{R}^{\ell \times \ell}$ and $e, f\in \mathbb{R}^{\ell}$ are known.

There is a strong connection between GAVE~\eqref{eq:gave} (or AVE~\eqref{eq:ave}) and the LCP as well as its extensions. In \cite{mame2006}, AVE~\eqref{eq:ave} is equivalently reformulated as the following GLCP:  find an $x\in\mathbb{R}^n$ such that
\begin{equation}\label{eq:ave2glcp}
Ax + x -c\ge 0, \quad Ax-x -c\ge 0, \quad (Ax + x -c)^\top ( Ax-x -c) = 0,
\end{equation}
which, under the assumption that $1$ is not an eigenvalue of $A$, can be reduced to the following LCP: find a $z\in \mathbb{R}^n$ (which then solves AVE~\eqref{eq:ave} by $x = (A-I)^{-1} (z+c)$) such that
\begin{equation}\label{eq:ave2lcp}
w = (A+I)(A-I)^{-1}z + q \ge 0, \quad z\ge 0, \quad w^\top z = 0
\end{equation}
with
\begin{equation}\label{eq:qz}
q = [(A+I)(A-I)^{-1} - I]c.
\end{equation}
In \cite{prok2009}, GAVE~\eqref{eq:gave} is reformulated as a standard LCP without any additional assumption on the coefficient matrices $A$ and $B$. However, the dimension of the matrix in the obtained LCP is greater than that of $A$ (and $B$) in the original GAVE~\eqref{eq:gave}. In \cite{huhu2010}, based on \eqref{eq:ave2glcp}, AVE~\eqref{eq:ave} is transformed into the following HLCP without any assumption on the coefficient matrix $A$: find $w\in \mathbb{R}^n$ and $z\in \mathbb{R}^n$ (which then solve AVE~\eqref{eq:ave} by $x = \frac{1}{2}(z -w)$) such that
\begin{equation*}\label{ave2hlcp}
(I+A) w - (A - I) z = -2c,\quad w\ge 0,\quad z\ge 0, \quad w^\top z = 0,
\end{equation*}
which is reduced to the following LCP: find a $z\in \mathbb{R}^n$ such that
\begin{equation*}\label{eq:ave2lcpn}
w = (AD - I)^{-1}(AD + I) z + 2(AD - I)^{-1}c\ge 0, \quad z\ge 0, \quad w^\top z = 0,
\end{equation*}
where $D$ is a diagonal matrix with its diagonal elements being $1$ or $-1$, which is determined by an index set; see the proof of  \cite[Lemma~2.1]{huhu2010} for more details. Conversely, letting $w = |x| + x$ and $z = |x| - x$,  we can find a solution to LCP~\eqref{eq:lcp} by solving the following GAVE:
\begin{equation}\label{eq:lcp2gave}
(M+I)x - (M-I)|x| = q,
\end{equation}
which can be transformed into AVE
\begin{equation}\label{eq:lcp2ave}
(M-I)^{-1}(M+I)x - |x| = (M-I)^{-1}q
\end{equation}
whenever $1$ is not an eigenvalue of $M$. Without loss of generality, we can always assume that $1$ is not an eigenvalue of $M$. Then, the solution of LCP can be obtained by solving GAVE~\eqref{eq:lcp2gave} or AVE~\eqref{eq:lcp2ave}. However, since $(M-I)^{-1}$ does not be required, from a computational perspective, GAVE~\eqref{eq:lcp2gave} is more attractive than AVE~\eqref{eq:lcp2ave}.
In \cite{mezz2020}, the equivalence between GAVE~\eqref{eq:gave} and HLCP~\eqref{eq:hlcp} is investigated. Specifically,  if $x$ is a solution of GAVE~\eqref{eq:gave}, then the vectors $z = \max\{x,0\}$ and $w = \max\{-x,0\}$ solve the HLCP
\begin{equation}\label{eq:gave2hlcp}
(A-B)z - (A+B)w = c, \quad z\ge 0,\quad w\ge 0, \quad w^\top z = 0.
\end{equation}
Conversely, if $(z,w)$ solve HLCP~\eqref{eq:hlcp}, then $x = z-w$ solves GAVE~\eqref{eq:gave} with $A = \frac{1}{2}(P +Q)$, $B = \frac{1}{2}(Q - P)$ and $c = p$.

Since AVE~\eqref{eq:ave} can be transformed into the standard LCP, GLCP or HLCP, we can find a solution to AVE~\eqref{eq:ave} by solving LCP, GLCP or HLCP. Based on HLCP~\eqref{eq:gave2hlcp}, Gao and Wang \cite{gawa2014} propose a one-layer neural network for solving AVE~\eqref{eq:ave} and prove that the neural network is globally exponentially stable if $\sigma_{\min}(A) > 1$. Based on LCP~\eqref{eq:ave2lcp}--\eqref{eq:qz}, Huang and Cui \cite{hucu2017}, Mansoori et al. \cite{maee2017} and Mansoori and Erfanian \cite{maer2018} propose three neural networks for solving AVE~\eqref{eq:ave} which are proved to be globally asymptotically stable under certain conditions. We should mention that the stability condition of the neural network proposed in \cite{hucu2017} is corrected in \cite{yuch2023}. Ju et al. \cite{jlhh2022} propose a novel projection neurodynamic network with fixed-time convergence for solving AVE~\eqref{eq:ave}. During the construction of all neural networks mentioned above, a matrix inversion is required. In order to overcome this drawback, based on~\eqref{eq:ave2glcp}, Chen et al. \cite{cyyh2021} propose an inverse-free dynamical system for solving AVE~\eqref{eq:ave} and the (globally) asymptotical stability is proved. Yu et al. \cite{ycyh2023} propose an inertial inverse-free dynamical system for solving AVE~\eqref{eq:ave} and the asymptotical convergence is proved. Li et al. \cite{lyyh2023} propose a new fixed-time dynamical system for solving AVE~\eqref{eq:ave}. In \cite{jyfc2023}, compared with \cite{lyyh2023}, two more accurate upper bounds of settle time and robustness analysis are given. Zhang et al. \cite{zlzh2024} propose two new accelerated fixed-time stable dynamic systems for solving AVE~\eqref{eq:ave}. Yu et al. \cite{yzch2024} propose two inverse-free neural network models with delays for solving AVE~\eqref{eq:ave}.

LCP has a wide range of applications in applied science and technology \cite{cops1992}. As mentioned earlier,  LCP can be solved by solving a GAVE or AVE. This is exactly the idea of the modulus-based methods \cite{bai2010,murt1988,bokh1980,zhyi2013,doji2009,hatz2009}, to name only a few.
Though there are many dynamical systems for solving AVE~\eqref{eq:ave}, the dynamical system for solving GAVE~\eqref{eq:gave} (when $B$ is singular, GAVE~\eqref{eq:gave} cannot be reformulated as AVE~\eqref{eq:ave}) is rare. According to \cite{prok2009}, constructing a dynamical system for solving GAVE~\eqref{eq:gave} through LCP is not wise due to the expansion of the dimension. When $A$ is nonsingular, the neural network proposed in \cite{gawa2014} can be adopted to solve  GAVE~\eqref{eq:gave}, which is described as follows:

\begin{itemize}
  \item state equation
  \begin{equation}\label{eq:nn4gave}
  \frac{{\rm d} z}{{\rm d}t} = \frac{1}{2}\rho (|A^{-1} (Bz + c)| - z),
  \end{equation}

  \item output equation
  \begin{equation}\label{eq:outx}
  x = A^{-1} (Bz + c),
  \end{equation}
\end{itemize}
where $\rho > 0$ is a scaling constant. Obviously, a matrix inversion is required in \eqref{eq:nn4gave} and~\eqref{eq:outx}. According to \cite[Theorem~5]{gawa2014}, we can prove that the neural network \eqref{eq:nn4gave} is globally exponentially stable if $\sigma_{\min}(A) > \sigma_{\max}(B)$. Globally exponential or asymptotical stability characterizes the property of the equilibrium point as time goes to infinity, which seems hard to be controlled in real-world. To overcome this drawback, the control theory provides many systems that exhibit finite-time convergence to the equilibrium, especially the fixed-time stability, see, e.g, \cite{poly2012}. The goal of this paper is to construct an inverse-free and globally fixed-time stable dynamical system for solving GAVE~\eqref{eq:gave}. We are interested in GAVE~\eqref{eq:gave} not only due to its connections with complementarity problems, but also due to its applications in  the linear interval equations  \cite{rohn1989,liwu2025},   the cancellable biometric system \cite{dnhk2023} and the ridge regression problem \cite{xiqh2024}.

Our work here is inspired by \cite{lyyh2023}. The theoretical analysis in~\cite{lyyh2023} relies on \cite[Theorem~3.5]{cyyh2021}  and~\cite[Theorem~4.1]{cyyh2021}. The proofs of \cite[Theorem~3.5]{cyyh2021}  and~\cite[Theorem~4.1]{cyyh2021} depend on the following two key properties:
\begin{itemize}
  \item the equivalence between AVE~\eqref{eq:ave} and GLCP~\eqref{eq:ave2glcp}, and

  \item a property of the projection operator onto the nonnegative orthant.
\end{itemize}

However, the two properties are lacking for GAVE~\eqref{eq:gave} since GAVE~\eqref{eq:gave} cannot be reformulated as a GLCP or a nonlinear projection equation. Fortunately, by skipping the two properties, we can prove the following Theorem~\ref{thm:wucha1} and Theorem~\ref{thm:wucha2} for GAVE~\eqref{eq:gave}, which are counterparts of \cite[Theorem~3.5]{cyyh2021}  and~\cite[Theorem~4.1]{cyyh2021}, respectively. Then we can construct an inverse-free and globally fixed-time stable dynamical system for solving GAVE~\eqref{eq:gave}.

The rest of this paper is organized as follows. In Section \ref{sec2:pre} we state a few basic results on GAVE~\eqref{eq:gave} and the dynamic system, which are relevant to our later developments. A fixed-time dynamic system to solve GAVE~\eqref{eq:gave} is developed in Section \ref{sec3:main} and its convergence analysis is also given there. In Section~\ref{sec:fed}, the forward-Euler discretization of the proposed model is studied. Numerical simulations are given in Section \ref{sec4:numerical}.
Conclusions are made in Section \ref{sec5:conclusion}.

\textbf{Notation.} We use $\mathbb{R}^{n\times n}$ to denote the set of all $n \times n$ real matrices and $\mathbb{R}^{n}= \mathbb{R}^{n\times 1}$. We use $\mathbb{R}_+$ to denote the nonnegative reals. $I$ is the identity matrix with suitable dimension. $| \cdot |$ denotes absolute value for real scalar. The transposition of a matrix or a vector is denoted by $\cdot ^\top$. The inner product of two vectors in $\mathbb{R}^n$ is defined as $\langle x, y\rangle\doteq x^\top y= \sum\limits_{i=1}^n x_i y_i$ and $\| x \|\doteq\sqrt{\langle x, x\rangle} $ denotes the Euclidean norm of the vector $x\in \mathbb{R}^{n}$. $\|A\|$ denotes the spectral norm of $A$ and is defined by the formula $\| A \|\doteq \max \left\{ \| A x \| : x \in \mathbb{R}^{n}, \|x\|=1 \right\}$. The smallest singular value and the largest singular value  of~$A$ are denoted by $\sigma_{\min}(A)$ and $\sigma_{\max}(A)$, respectively. The projection mapping from $\mathbb{R}^n$ onto $\Omega$, denoted by $P_{\Omega}$, is defined as $P_{\Omega}[x]=\arg\min\{\|x-y\|:y\in \Omega\}$. A function $\rho: \mathbb{R}_+ \rightarrow  \mathbb{R}_+$ is said to belong to class $\mathcal{K}_\infty$ if it is continuous, zero at zero, strictly increasing, and unbounded. Given a set $S$, ${\rm conv} S$ denotes its convex hull, and $\mathbb{B}$ denotes the closed unit ball in a Euclidean space.

\section{Preliminaries}\label{sec2:pre}
For subsequent discussions, in this section we introduce some basic properties of GAVE~\eqref{eq:gave} and dynamical systems.

\begin{lemma}[{\rm\!\!{\cite[Theorem~2.1]{wuli2020}}}]\label{lem:gaveunique}
Suppose that $A,B\in\mathbb{R}^{n\times n}$ and~$\sigma_{\min}(A)>\|B\|$.  Then GAVE~\eqref{eq:gave} has a unique solution for any $c\in\mathbb{R}^n$.
\end{lemma}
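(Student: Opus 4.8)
The plan is to recast GAVE~\eqref{eq:gave} as a fixed-point problem and invoke the Banach contraction principle. First I would observe that the hypothesis $\sigma_{\min}(A) > \|B\| \ge 0$ forces $\sigma_{\min}(A) > 0$, so $A$ is nonsingular and $\|A^{-1}\| = 1/\sigma_{\min}(A)$. Consequently $x$ solves GAVE~\eqref{eq:gave} if and only if it is a fixed point of the map $F:\mathbb{R}^n \to \mathbb{R}^n$ defined by $F(x) = A^{-1}(B|x| + c)$.

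The core of the argument is to show that $F$ is a contraction. For any $x, y \in \mathbb{R}^n$ we have $F(x) - F(y) = A^{-1}B(|x| - |y|)$, hence
\[
\|F(x) - F(y)\| \le \|A^{-1}B\| \, \big\| |x| - |y| \big\| \le \frac{\|B\|}{\sigma_{\min}(A)} \big\| |x| - |y| \big\|.
\]
The key elementary estimate is that the componentwise absolute value is nonexpansive, i.e. $\big\| |x| - |y| \big\| \le \|x - y\|$; this follows from the scalar reverse triangle inequality $\big| |x_i| - |y_i| \big| \le |x_i - y_i|$ applied coordinatewise and then squaring and summing. Combining these gives
\[
\|F(x) - F(y)\| \le \frac{\|B\|}{\sigma_{\min}(A)} \, \|x - y\|,
\]
and the hypothesis $\sigma_{\min}(A) > \|B\|$ makes the Lipschitz constant $\|B\|/\sigma_{\min}(A)$ strictly less than $1$.

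Since $\mathbb{R}^n$ is complete and $F$ is a contraction, the Banach fixed-point theorem yields a unique fixed point of $F$, which is precisely the unique solution of GAVE~\eqref{eq:gave}; as $c$ was arbitrary, existence and uniqueness hold for every $c \in \mathbb{R}^n$. I do not anticipate a genuine obstacle here: the only points requiring care are the nonexpansiveness of $|\cdot|$ together with the submultiplicativity bound $\|A^{-1}B\| \le \|A^{-1}\|\,\|B\| = \|B\|/\sigma_{\min}(A)$, both of which are standard. An alternative, if one wished to avoid the fixed-point machinery, would be to prove uniqueness directly (if $Ax_1 - B|x_1| = Ax_2 - B|x_2|$ then $\sigma_{\min}(A)\|x_1 - x_2\| \le \|A(x_1-x_2)\| = \|B(|x_1|-|x_2|)\| \le \|B\|\,\|x_1-x_2\|$, forcing $x_1 = x_2$) and to establish existence separately via a homotopy or Brouwer-type argument; however, the contraction approach delivers both at once and is the cleanest route.
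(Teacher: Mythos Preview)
Your contraction-mapping argument is correct and complete: the nonsingularity of $A$, the identity $\|A^{-1}\|=1/\sigma_{\min}(A)$, the nonexpansiveness of the componentwise absolute value, and the Banach fixed-point theorem combine exactly as you describe. There is nothing to compare against in the paper itself, since Lemma~\ref{lem:gaveunique} is simply quoted from \cite[Theorem~2.1]{wuli2020} without proof; your write-up supplies a clean self-contained justification that the paper omits.
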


\begin{theorem}\label{thm:wucha1}
Let~$A,B\in\mathbb{R}^{n\times n}$ and~$c\in\mathbb{R}^n$. If~$\sigma_{\min}(A)>\|B\|$, then for any $x\in\mathbb{R}^n$ we have
\begin{equation}\label{neq:wucha1}
(x-x_*)^\top A^\top (Ax - B|x| - c)\geq \frac{1}{2}\|Ax - B|x| - c\|^2,
\end{equation}
where~$x_*$ is the unique solution to~{\rm GAVE}~\eqref{eq:gave}.
\end{theorem}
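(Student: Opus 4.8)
The plan is to reduce \eqref{neq:wucha1} to a single comparison of norms by exploiting that $x_*$ is a root of the residual map. First I would set $F(x)\doteq Ax-B|x|-c$; since $x_*$ solves GAVE~\eqref{eq:gave} we have $F(x_*)=0$, so $c=Ax_*-B|x_*|$ and hence the residual takes the difference form
\begin{equation*}
Ax-B|x|-c = A(x-x_*)-B(|x|-|x_*|).
\end{equation*}
Abbreviating $y\doteq x-x_*$ and $z\doteq |x|-|x_*|$, the left-hand side of \eqref{neq:wucha1} becomes $y^\top A^\top(Ay-Bz)=\|Ay\|^2-(Ay)^\top(Bz)$, while the right-hand side equals $\tfrac12\|Ay-Bz\|^2=\tfrac12\|Ay\|^2-(Ay)^\top(Bz)+\tfrac12\|Bz\|^2$.

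Subtracting the two expressions, the cross term $(Ay)^\top(Bz)$ cancels, so \eqref{neq:wucha1} is equivalent to $\tfrac12\|Ay\|^2\ge\tfrac12\|Bz\|^2$, that is,
\begin{equation*}
\|A(x-x_*)\|\ge\|B(|x|-|x_*|)\|.
\end{equation*}
This is the crux of the argument, and I would prove it by estimating each side separately. From the definition of the smallest singular value, $\|A(x-x_*)\|\ge\sigma_{\min}(A)\|x-x_*\|$; from the definition of the spectral norm, $\|B(|x|-|x_*|)\|\le\|B\|\,\||x|-|x_*|\|$.

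The decisive step, and the only place where the structure of the absolute value really enters, is the componentwise reverse triangle inequality $\bigl||x_i|-|x_{*i}|\bigr|\le|x_i-x_{*i}|$, which gives the nonexpansiveness estimate $\||x|-|x_*|\|\le\|x-x_*\|$. Chaining the three bounds together with the hypothesis $\sigma_{\min}(A)>\|B\|$ then yields
\begin{equation*}
\|B(|x|-|x_*|)\|\le\|B\|\,\|x-x_*\|\le\sigma_{\min}(A)\|x-x_*\|\le\|A(x-x_*)\|,
\end{equation*}
which is precisely the reduced inequality and therefore establishes \eqref{neq:wucha1}. I expect the only genuine subtlety to be recognizing that the cross term cancels so that the whole statement collapses to comparing $\|Ay\|$ with $\|Bz\|$; once that is seen, the singular-value lower bound together with the nonexpansiveness of the map $x\mapsto|x|$ finishes the proof, the latter serving here as the replacement for the GLCP reformulation and projection property that are unavailable for GAVE~\eqref{eq:gave}.
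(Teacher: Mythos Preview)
Your proposal is correct and is essentially the same argument as the paper's: both substitute $c=Ax_*-B|x_*|$ to write the residual as $A(x-x_*)-B(|x|-|x_*|)$, expand and observe that the cross term $(A(x-x_*))^\top(B(|x|-|x_*|))$ cancels so that \eqref{neq:wucha1} reduces to $\tfrac12\|A(x-x_*)\|^2\ge\tfrac12\|B(|x|-|x_*|)\|^2$, and then finish via $\|Au\|\ge\sigma_{\min}(A)\|u\|$, $\|Bv\|\le\|B\|\,\|v\|$, and the nonexpansiveness $\||x|-|x_*|\|\le\|x-x_*\|$. The only cosmetic difference is that the paper invokes Lemma~\ref{lem:gaveunique} up front to justify the existence and uniqueness of $x_*$.
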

\begin{proof}
Lemma~\ref{lem:gaveunique} and~$\sigma_{\min}(A)>\|B\|$ imply that GAVE~\eqref{eq:gave} has a unique solution $x_*$ for any $c\in\mathbb{R}^n$. Since $Ax_* - B|x_*| - c = 0$ and $\||x| - |x_*|\|\le \|x - x_*\|$, for any~$x\in\mathbb{R}^n$  we have
\begin{align*}
   (x- &x_*)^\top A^\top (Ax - B|x| - c) - \frac{1}{2}\|Ax - B|x| - c\|^2\\
    & = (x - x_*)^\top A^\top(Ax - B|x| - Ax_* + B|x_*|) - \frac{1}{2}\|Ax - B|x| - Ax_* + B|x_*|\|^2  \\
    & = (x - x_*)^\top A^\top A(x - x_*) - (x - x_*)^\top A^\top B(|x| - |x_*|) - \frac{1}{2}\|A(x - x_*)\|^2 \\
    & \qquad- \frac{1}{2}\|B(|x| - |x_*|)\|^2 + (x - x_*)^\top A^\top B(|x| - |x_*|)\\
    & = \frac{1}{2}\|A(x - x_*)\|^2 - \frac{1}{2}\|B(|x| - |x_*|)\|^2\\
    & \geq \frac{\sigma_{\min}^2(A)}{2}\|x - x_*\|^2 - \frac{\|B\|^2}{2}\|x - x_*\|^2\\
    & = \frac{\sigma_{\min}^2(A) - \|B\|^2}{2}\|x - x_*\|^2\\
    &\geq0,
\end{align*}
in which the last inequality follows from~$\sigma_{\min}(A) > \|B\|$ and~$\|x-x_*\|\geq0$.\hfill
\end{proof}
If~$B=I$, Theorem~\ref{thm:wucha1} reduces to the following Corollary~\ref{lem:wucha1}, which is a part of \cite[Theorem~3.5]{cyyh2021}. However, our proof here is differ from that of \cite[Theorem~3.5]{cyyh2021}. Specifically, the proof of \cite[Theorem~3.5]{cyyh2021} leverages the properties of GLCP~\eqref{eq:ave2glcp} and the projection mapping onto the nonnegative orthant, which are not used in the proof of  Theorem~\ref{thm:wucha1}.

\begin{corollary}\label{lem:wucha1}
Let~$A\in\mathbb{R}^{n\times n}$ and~$c\in\mathbb{R}^n$. If~$\sigma_{\min} (A) > 1$, then for any $x\in\mathbb{R}^n$ we have
\begin{equation*}\label{neq:wucha0}
(x - x_*)^\top A^\top (Ax - |x| - c)\geq\dfrac{1}{2}\|Ax - |x| - c\|^2,
\end{equation*}
where~$x_*$ is the unique solution to~AVE~\eqref{eq:ave}.
\end{corollary}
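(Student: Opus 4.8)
The plan is to derive this corollary as a direct specialization of Theorem~\ref{thm:wucha1}. First I would observe that AVE~\eqref{eq:ave} is exactly GAVE~\eqref{eq:gave} with the choice $B = I$, so the residual $Ax - B|x| - c$ appearing in inequality~\eqref{neq:wucha1} collapses to $Ax - |x| - c$, and the quantity $(x - x_*)^\top A^\top(Ax - |x| - c)$ is precisely the left-hand side of~\eqref{neq:wucha1} evaluated at $B = I$.

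The only point requiring care is that the hypotheses of the two statements coincide. Since the spectral norm of the identity satisfies $\|I\| = 1$, the assumption $\sigma_{\min}(A) > \|B\|$ of Theorem~\ref{thm:wucha1} specializes to $\sigma_{\min}(A) > 1$, which is exactly the hypothesis of the corollary. Applying Lemma~\ref{lem:gaveunique} with $B = I$ under this same condition also guarantees that AVE~\eqref{eq:ave} has a unique solution $x_*$, so the solution referenced in the statement is well-defined.

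With the hypotheses aligned, the conclusion is immediate: setting $B = I$ in~\eqref{neq:wucha1} gives
\begin{equation*}
(x - x_*)^\top A^\top (Ax - |x| - c) \geq \frac{1}{2}\|Ax - |x| - c\|^2
\end{equation*}
for every $x \in \mathbb{R}^n$. There is no genuine obstacle here, as the corollary is a pure restriction of an already-proven result and the only bookkeeping is the identity $\|I\| = 1$. Should a self-contained argument be preferred, the computation in the proof of Theorem~\ref{thm:wucha1} may be replayed verbatim with $B = I$, whereupon the bound $\|B(|x| - |x_*|)\| \le \|B\|\,\|x - x_*\|$ becomes the elementary estimate $\||x| - |x_*|\| \le \|x - x_*\|$; invoking the theorem directly is, however, more economical.
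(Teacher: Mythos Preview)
Your proposal is correct and matches the paper's approach exactly: the paper simply remarks that setting $B=I$ in Theorem~\ref{thm:wucha1} yields the corollary, without writing out any further argument. Your additional observation that $\|I\|=1$ aligns the hypotheses is the only bookkeeping needed, and you have handled it properly.
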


\begin{theorem}\label{thm:wucha2}
Let~$A,B\in\mathbb{R}^{n\times n}$ and~$c\in\mathbb{R}^n$. If~$\sigma_{\min}(A)>\|B\|$, then for any $x\in\mathbb{R}^n$ we have
\begin{equation}\label{neq:wucha2}
\frac{1}{\|A\|+\|B\|}\|Ax - B|x| - c\|\leq\|x - x_*\|\leq\frac{1}{\sigma_{\min}(A) - \|B\|}\|Ax - B|x| - c\|,
\end{equation}
where~$x_*$ is the unique solution to~GAVE~\eqref{eq:gave}.
\end{theorem}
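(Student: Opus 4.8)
The plan is to use the defining property of the solution to turn the residual into a difference of two quantities each controlled by $\|x-x_*\|$. Since $\sigma_{\min}(A)>\|B\|$, Lemma~\ref{lem:gaveunique} guarantees that GAVE~\eqref{eq:gave} has a unique solution $x_*$, so $Ax_* - B|x_*| - c = 0$. Subtracting this from $Ax - B|x| - c$ gives the key algebraic identity
\[
Ax - B|x| - c = A(x - x_*) - B(|x| - |x_*|),
\]
and both inequalities in~\eqref{neq:wucha2} will follow by estimating the right-hand side. The three elementary facts I would combine are the singular-value bounds $\sigma_{\min}(A)\|y\|\le\|Ay\|\le\|A\|\|y\|$, the submultiplicativity $\|B(|x|-|x_*|)\|\le\|B\|\,\||x|-|x_*|\|$, and the nonexpansiveness of the absolute value $\||x|-|x_*|\|\le\|x-x_*\|$ (already invoked in the proof of Theorem~\ref{thm:wucha1}).

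For the right-hand (upper) bound I would apply the reverse triangle inequality to the identity above:
\[
\|Ax - B|x| - c\| \ge \|A(x-x_*)\| - \|B(|x|-|x_*|)\| \ge \sigma_{\min}(A)\|x-x_*\| - \|B\|\,\|x-x_*\|,
\]
where the last step uses the three facts listed above. This yields $\|Ax - B|x| - c\|\ge(\sigma_{\min}(A)-\|B\|)\|x-x_*\|$, and dividing by the positive quantity $\sigma_{\min}(A)-\|B\|$ (positive precisely because of the hypothesis $\sigma_{\min}(A)>\|B\|$) produces the claimed upper bound on $\|x-x_*\|$.

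For the left-hand (lower) bound I would instead apply the ordinary triangle inequality to the same identity,
\[
\|Ax - B|x| - c\| \le \|A(x-x_*)\| + \|B(|x|-|x_*|)\| \le \|A\|\,\|x-x_*\| + \|B\|\,\|x-x_*\|,
\]
giving $\|Ax - B|x| - c\|\le(\|A\|+\|B\|)\|x-x_*\|$, which rearranges to the left inequality. There is no real obstacle here: the entire argument is a direct consequence of the residual identity, the two directions of the triangle inequality, and the norm estimates, and the only point requiring the hypothesis $\sigma_{\min}(A)>\|B\|$ is the well-posedness of $x_*$ (via Lemma~\ref{lem:gaveunique}) together with the positivity of the denominator $\sigma_{\min}(A)-\|B\|$ in the upper bound. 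The mild care needed is simply to pair the reverse triangle inequality with the \emph{smallest} singular value for the upper bound and the ordinary triangle inequality with the spectral norm for the lower bound.
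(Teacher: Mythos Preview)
Your proof is correct and essentially identical to the paper's: both use $Ax_*-B|x_*|=c$ to rewrite the residual as $A(x-x_*)-B(|x|-|x_*|)$, then apply the triangle inequality (with $\|A\|$) for one direction and the reverse triangle inequality (with $\sigma_{\min}(A)$) for the other, combined with $\||x|-|x_*|\|\le\|x-x_*\|$. The only cosmetic difference is that the paper also notes explicitly that $\|A\|+\|B\|>0$ when dividing, which you could add for completeness.
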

\begin{proof}
It follows from Lemma~\ref{lem:gaveunique} and~$\sigma_{\min}(A)>\|B\|$ that GAVE~\eqref{eq:gave} has a unique solution $x_*$ for any $c\in\mathbb{R}^n$. Since $Ax_* - B|x_*| = c$ and $\||x| - |x_*|\|\le \|x - x_*\|$, for any~$x\in\mathbb{R}^n$, we have
\begin{equation}\label{nr:upbound}
\|Ax - B|x| - c\| = \|A(x - x_*) - B(|x| - |x_*|)\|\leq (\|A\| + \|B\|)\|x - x_*\|
\end{equation}
and
\begin{align}\nonumber
\|Ax - B|x| - c\| &= \|A(x - x_*) - B(|x| - |x_*|)\|\\\nonumber
&\geq \|A(x - x_*)\| - \|B(|x| - |x_*|)\|\\\label{nr:dombound}
&\geq(\sigma_{\min}(A) - \|B\|)\|x - x_*\|.
\end{align}
Then \eqref{neq:wucha2} follows from $\sigma_{\min}(A)>\|B\|$, $\|A\|+\|B\|>0$, \eqref{nr:upbound} and~\eqref{nr:dombound}.
\end{proof}

\begin{remark}\label{rem:wucha2}{\rm
If $B=I$, then Theorem~\ref{thm:wucha2} reduces  to~\cite[Corollary~2.2]{zlzh2024}. As shown in \cite{zlzh2024}, in this case, the error bound~\eqref{neq:wucha2} is tighter than the one proposed by~\cite[Theorem~4.1]{cyyh2021}.}
\end{remark}

Let~$f:\mathbb{R}^n\rightarrow \mathbb{R}^n $~be a continuous vector-valued function.  Consider the autonomous differential equation
\begin{equation}\label{eq:dynamic}
\frac{{\rm d}x}{{\rm d}t}=f(x).
\end{equation}
The solution of \eqref{eq:dynamic} with $x(0) = x_0$ is denoted by $x(t;x_0)$.

\begin{definition}{\rm(\!\!{\cite[p. 3]{khalil1996}})}
A point $x_*\in\mathbb{R}^n$ is said to be an equilibrium point of \eqref{eq:dynamic} if $f(x_*)=0$.
\end{definition}

\begin{lemma}[{\rm\!\!{\cite[Lemma~1]{poly2012}}}]\label{lem:ft}
Let~$x_*\in \mathbb{R}^n$ be an equilibrium point of \eqref{eq:dynamic}. If there exists a radially unbounded continuous function~$V:\mathbb{R}^n\rightarrow \mathbb{R}_+$ such that
\begin{itemize}
  \item [{\rm(1)}] $V(x) = 0 ~\Rightarrow x = x_*;$

  \item [{\rm(2)}] any solution~$x(t;x_{0})$ of \eqref{eq:dynamic} satisfies
$$
\frac{{\rm d}V(x(t;x_{0}))}{{\rm d}t} \le -\alpha V(x(t;x_{0}))^{\kappa_1} - \beta V(x(t;x_{0}))^{\kappa_2}
$$ for some  $\alpha>0,\beta>0$, $0<\kappa_1<1$, and $\kappa_2>1$.
\end{itemize}
Then the equilibrium point $x_*$ of  \eqref{eq:dynamic} is globally fixed-time stable with
$$
T(x_{0}) \le T_{\max} = \frac{1}{\alpha(1-\kappa_1)} + \frac{1}{\beta(\kappa_2 - 1)},~\forall x_{0}\in \mathbb{R}^n.
$$
\end{lemma}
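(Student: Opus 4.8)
The plan is to reduce the vector statement to a scalar differential inequality along trajectories and then estimate a settling-time integral. Fix an initial point $x_0$ and write $v(t)=V(x(t;x_0))$ for the value of the Lyapunov function along the solution. If $v(0)=0$, then condition~(1) forces $x_0=x_*$ and there is nothing to prove, so assume $v(0)=:v_0>0$. Setting $g(s)=\alpha s^{\kappa_1}+\beta s^{\kappa_2}$, which is strictly positive for $s>0$, condition~(2) reads $\dot v(t)\le -g(v(t))$; in particular $v$ is strictly decreasing as long as it stays positive. The goal is to show that $v$ reaches $0$ at some time $T(x_0)\le T_{\max}$, after which condition~(1) identifies the state with $x_*$.

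For the core estimate I would separate variables. On any interval where $v>0$ we may divide by $g(v)>0$ to obtain $g(v(t))^{-1}\dot v(t)\le -1$; integrating over $[0,t]$ and performing the change of variable $s=v(\tau)$ yields
\begin{equation*}
\int_{v(t)}^{v_0}\frac{ds}{g(s)}\ge t.
\end{equation*}
Letting $t$ approach the first time $T(x_0)$ at which $v$ vanishes (using that $v(t)\to 0^+$) gives the bound
\begin{equation*}
T(x_0)\le \int_0^{v_0}\frac{ds}{\alpha s^{\kappa_1}+\beta s^{\kappa_2}}\le \int_0^{\infty}\frac{ds}{\alpha s^{\kappa_1}+\beta s^{\kappa_2}}.
\end{equation*}

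It remains to evaluate the improper integral, which I would bound by splitting the range at $s=1$. On $(0,1]$ the term $\beta s^{\kappa_2}$ is nonnegative, so the integrand is at most $(\alpha s^{\kappa_1})^{-1}$ and, since $\kappa_1<1$, contributes at most $\frac{1}{\alpha(1-\kappa_1)}$; on $[1,\infty)$ the term $\alpha s^{\kappa_1}$ is nonnegative, so the integrand is at most $(\beta s^{\kappa_2})^{-1}$ and, since $\kappa_2>1$, contributes at most $\frac{1}{\beta(\kappa_2-1)}$. Adding the two pieces gives exactly $T_{\max}=\frac{1}{\alpha(1-\kappa_1)}+\frac{1}{\beta(\kappa_2-1)}$, establishing $T(x_0)\le T_{\max}$ independently of $x_0$. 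Finally, once $v(T(x_0))=0$, nonnegativity of $V$ together with $\dot v\le -g(v)\le 0$ forces $v\equiv 0$ thereafter, so $x(t;x_0)=x_*$ for all $t\ge T(x_0)$, which is precisely global fixed-time stability.

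The step I expect to require the most care is the separation-of-variables/comparison argument: because $f$ is only assumed continuous and $V$ only continuous, $v(t)$ need not be classically differentiable, so the clean integration above should really be justified through a comparison lemma for the scalar ODE $\dot y=-g(y)$, working with the upper Dini derivative of $v$. Radial unboundedness of $V$ enters here as well, guaranteeing that sublevel sets are bounded so that the solution exists and remains bounded on $[0,T(x_0))$ and the limit $v(t)\to 0^+$ is legitimate; the convergence of the improper integral is what converts a merely finite settling time into a bound uniform over all initial conditions.
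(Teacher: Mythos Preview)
The paper does not prove this lemma at all; it is quoted verbatim as \cite[Lemma~1]{poly2012} and used as a black box in the proof of Theorem~\ref{thm:gconv}. Your argument is the standard one behind Polyakov's result: reduce to the scalar inequality $\dot v\le -\alpha v^{\kappa_1}-\beta v^{\kappa_2}$, bound the settling time by the improper integral $\int_0^\infty (\alpha s^{\kappa_1}+\beta s^{\kappa_2})^{-1}\,ds$, and evaluate it by splitting at $s=1$ and dropping the smaller term on each piece. The computation and the caveats you flag (Dini derivative / comparison lemma since $V$ is only continuous, radial unboundedness to ensure forward completeness) are all correct, so there is no gap.
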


\begin{lemma}[{\rm\!\!{\cite[Proposition~1]{gbgb2022}}}]\label{lem:unique}
Let $f:\mathbb{R}^n\rightarrow \mathbb{R}^n$ be a locally Lipschitz continuous vector-valued function such that
\begin{equation*}
f(x_*)=0\quad\text{and}\quad \langle x - x_*,f(x)\rangle>0,\forall x\in\mathbb{R}^n\setminus\{x_*\},
\end{equation*}
where~$x_*\in\mathbb{R}^n$. Consider the following autonomous differential equation
\begin{equation}\label{system:solu}
\frac{{\rm d}x}{{\rm d}t}=-\rho(x)f(x),
\end{equation}
where
$$\rho(x):=
\left\{
  \begin{array}{ll}
  \frac{\rho_1}{\|f(x)\|^{1-\lambda_1}} + \frac{\rho_2}{\|f(x)\|^{1-\lambda_2}}, & \text{if}~f(x)\neq0,\\
    0, & \text{if}~f(x)=0
  \end{array}
\right.
$$
with~$\rho_1,~\rho_2>0$, $\lambda_1\in(0,1)$ and $\lambda_2>1$. Then, the right-hand side of~\eqref{system:solu} is continuous for all~$x\in\mathbb{R}^n$, and starting
from any given initial condition, a solution of~\eqref{system:solu} exists and is uniquely determined for all~$t\geq 0$.
\end{lemma}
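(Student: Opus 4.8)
The plan is to split the statement into its two assertions — continuity of the vector field $g(x) := -\rho(x)f(x)$ on all of $\mathbb{R}^n$, and global existence and uniqueness of solutions on $[0,\infty)$ — and to exploit one structural observation that underlies both. The hypothesis $\langle x - x_*, f(x)\rangle > 0$ for $x\neq x_*$ forces $f(x)\neq 0$ whenever $x\neq x_*$ (otherwise the inner product would vanish), so that $f(x) = 0 \iff x = x_*$, and consequently $\rho(x) > 0$ and $g(x)\neq 0$ for every $x\neq x_*$. Thus $x_*$ is simultaneously the unique equilibrium and the unique point at which the defining formula for $\rho$ switches branches.

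For continuity, first I would note that on the open set $\{x : f(x)\neq 0\} = \mathbb{R}^n\setminus\{x_*\}$ the field $g$ is a composition of continuous maps (each scalar map $s\mapsto s^{\lambda_i-1}$ is continuous for $s>0$, and $f$ is continuous), hence continuous there. The only candidate point of discontinuity is $x_*$, where $g(x_*)=0$. Here I would use the exact identity $\|g(x)\| = \rho_1\|f(x)\|^{\lambda_1} + \rho_2\|f(x)\|^{\lambda_2}$; since $f$ is continuous with $f(x_*)=0$ and both exponents $\lambda_1,\lambda_2$ are positive, $\|g(x)\|\to 0 = \|g(x_*)\|$ as $x\to x_*$, which yields continuity everywhere.

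Next I would establish that $g$ is locally Lipschitz on $\mathbb{R}^n\setminus\{x_*\}$: near any $x_0\neq x_*$, continuity of $f$ keeps $\|f\|$ bounded away from $0$, and on such a range $s\mapsto s^{\lambda_i-1}$ is Lipschitz, while $f$ is locally Lipschitz by hypothesis, so the resulting products are locally Lipschitz. This delivers local existence (continuity alone already gives it via Peano, but local Lipschitzness gives local uniqueness off $x_*$). For global existence I would introduce the Lyapunov candidate $V(x) = \tfrac{1}{2}\|x - x_*\|^2$ and compute $\dot V = \langle x - x_*, g(x)\rangle = -\rho(x)\langle x - x_*, f(x)\rangle \le 0$, with strict inequality for $x\neq x_*$. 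Monotonicity of $V$ confines each solution to the compact ball $\{x : \|x - x_*\|\le\|x_0 - x_*\|\}$, on which the continuous field $g$ is bounded; the standard continuation theorem then extends the solution to all $t\ge 0$.

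The hard part is uniqueness, precisely because $g$ fails to be Lipschitz at $x_*$, and — this being a fixed-time system — solutions are expected to reach $x_*$ in finite time, so the singular point cannot be avoided. The resolution I would use is again the Lyapunov function: if a solution sits at $x_*$ at some time $\tau$, then $V$ vanishes there, and since $V\ge 0$ is nonincreasing it must stay $0$ for all $t\ge\tau$, i.e. the solution is absorbed at $x_*$ and remains constant. Uniqueness then follows by a patching argument — two solutions from a common $x_0$ agree on the maximal interval on which they stay in $\mathbb{R}^n\setminus\{x_*\}$ by local Lipschitzness there, by continuity they reach $x_*$ at the same instant, and the absorption property forces them to coincide at $x_*$ thereafter; when $x_0=x_*$ the constant solution is the only one for the same reason. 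I expect the only delicate point to be writing the patching cleanly, arguing that the agreement set is both open (local uniqueness) and closed (continuity) on the relevant interval, which is routine once absorption at $x_*$ is in hand.
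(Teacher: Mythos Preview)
The paper does not supply its own proof of this lemma; it is quoted verbatim as \cite[Proposition~1]{gbgb2022} and used as a black box, so there is nothing in the paper to compare your argument against. Your outline is nonetheless a correct self-contained proof: the identification $f(x)=0\iff x=x_*$, the identity $\|\rho(x)f(x)\|=\rho_1\|f(x)\|^{\lambda_1}+\rho_2\|f(x)\|^{\lambda_2}$ for continuity at $x_*$, local Lipschitzness of $g$ on $\mathbb{R}^n\setminus\{x_*\}$, the Lyapunov bound $\dot V\le 0$ for confinement and global continuation, and the absorption-plus-patching argument for uniqueness all go through as you describe.
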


In order to study the $(T,\epsilon)$-close discrete-time approximation of the continuous-time dynamical systems, we adopt the following definition \cite[Definition~3]{gbgb2022}, which are adapted from \cite[Definition~3.2]{sate2010}.

\begin{definition}[\!\!{\cite[Definition~3]{gbgb2022}}]\label{defn:close}
Let $T > 0$, $\epsilon > 0$ and $\eta > 0$ be given. Assume that $x_c: [0,T]\rightarrow \mathbb{R}^n$ is a solution  of the following autonomous differential equation
\begin{equation}\label{eq:ncs}
\frac{{\rm d} x}{dt} = X_c(x),\quad  x(0) = x_{c}(0),
\end{equation}
where $X_c: \mathbb{R}^n \rightarrow \mathbb{R}^n$ is a continuous vector-valued function. For the following autonomous difference equation
\begin{equation}\label{eq:nds}
x^{(k+1)} = X_d(x^{(k)}), \quad x^{(0)} = x_{d}(0),
\end{equation}
a solution of it is denoted by $x_d: \left\{0, 1, 2, \ldots, \lfloor \frac{T}{\eta}\rfloor\right\}\rightarrow \mathbb{R}^n$. Here, $X_d: \mathbb{R}^n \rightarrow \mathbb{R}^n$ is also a continuous vector-valued function.
The solutions $x_c$ and $x_d$  are said to be $(T, \epsilon)$-close if
\begin{itemize}
  \item[{\rm (i)}] for each $t\in [0, T]$, there exists a $k\in \left\{0,1, 2, \ldots, \lfloor \frac{T}{\eta}\rfloor \right\}$ such that $|t - \eta k| < \epsilon$ and $\|x_c(t) - x_d(k)\| < \epsilon${\rm;}

  \item[{\rm (ii)}] for each $k\in\left\{0,1, 2, \ldots, \lfloor \frac{T}{\eta}\rfloor \right\}$, there exists a $t\in [0, T]$ such that $|t - k\eta| < \epsilon$ and $\|x_c(t) - x_d {(k)}\| < \epsilon$.
\end{itemize}
\end{definition}

Inspired by \cite[Lemma~3.1]{zhbr2025}, for closeness of continuous and discrete solutions, we recall \cite[Theorem~5.2]{sate2010}, which adapting it to our special case of a hybrid system with one differential equation.

\begin{lemma}[{\cite[Theorem~5.2]{sate2010}}]\label{lem:epsilon}
Consider the dynamic system \eqref{eq:ncs} and $X_c$ is assumed to be continuous and thus locally bounded in $\mathbb{R}^n$. Consider $X_d$ in \eqref{eq:nds} such that, for each compact set $K\subset \mathbb{R}^n$, there exists $\rho \in \mathcal{K}_\infty$ and $\eta_* > 0$ such that for each $x\in K$ and each $\eta \in (0,\eta_*]$,
\begin{equation}\label{eq:tec}
X_{d}(x)\in x + \eta {\rm conv} X_c(x + \rho(\eta)\mathbb{B}) + \eta\rho(\eta)\mathbb{B}.
\end{equation}
Then, for a compact set $K\subset\mathbb{R}^n$, $\forall\epsilon>0$, and $ \forall T\geq0$, there exists $\eta_* >0$ such that: $\forall\eta\in(0, \eta_*]$ and any discrete solution $x_d$ with $x^{(0)}\in K + \delta\mathbb{B}$ for some $\delta > 0$, there exists a continuous solution $x_c(t)$ with $x(0)\in K$ such that $x_d$ and $x_c$ are $(T, \epsilon)$-close.
\end{lemma}

\section{Fixed-time stable dynamical system for solving GAVE~\eqref{eq:gave}}\label{sec3:main}
In this section, we establish the following fixed-time stable and inverse-free dynamic model for solving GAVE~\eqref{eq:gave}:
\begin{equation}\label{stem:model53}
\frac{{\rm d}x}{{\rm d}t} = -\rho(x)g(\gamma, x) ,
\end{equation}
where
\begin{equation}\label{rho}
\rho(x)=\left\{
                      \begin{array}{ll}
                       \frac{\rho_1}{\|g(\gamma, x)\|^{1-\lambda_1}} + \frac{\rho_2}{\|g(\gamma, x)\|^{1-\lambda_2}}, &  \text{if}~g(\gamma, x)\neq 0, \\
                        0, &  \text{if}~g(\gamma, x) = 0,
                      \end{array}
                    \right.
\end{equation}
$g(\gamma,x) = \gamma A^\top r(x)$, $r(x)=Ax - B|x| - c$,  $\rho_1, \rho_2, \gamma>0$, $\lambda_1\in(0,1)$, and $\lambda_2\in(1,+\infty)$.

\begin{theorem}\label{equi}
Suppose that $A,B\in\mathbb{R}^{n\times n}$ and  $\sigma_{\min}(A)>\|B\|$. Then the dynamic model~\eqref{eq:dynamic} has a unique equilibrium point~$x_*$ for any $c\in \mathbb{R}^n$, which is the unique solution of  GAVE~\eqref{eq:gave}.
\end{theorem}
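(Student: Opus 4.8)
The plan is to show that the set of equilibrium points of the dynamic model~\eqref{stem:model53} coincides exactly with the solution set of GAVE~\eqref{eq:gave}, and then to invoke the uniqueness already guaranteed by Lemma~\ref{lem:gaveunique}. Recall that $x$ is an equilibrium point of \eqref{stem:model53} precisely when $\rho(x)g(\gamma,x)=0$, so the argument naturally splits according to whether the residual $r(x)=Ax-B|x|-c$ vanishes.

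First I would dispose of the easy inclusion. If $r(x)=0$, then $x$ solves GAVE~\eqref{eq:gave}, and by the definition~\eqref{rho} we have $\rho(x)=0$, whence $-\rho(x)g(\gamma,x)=0$; thus every solution of GAVE is an equilibrium point. Conversely, suppose $r(x)\neq 0$. Then, since $\rho_1,\rho_2>0$ and $\|r(x)\|>0$, the scaling factor $\rho(x)=\frac{\rho_1}{\|r(x)\|^{1-\lambda_1}}+\frac{\rho_2}{\|r(x)\|^{1-\lambda_2}}$ is strictly positive (note $\lambda_2>1$ merely makes the second term $\rho_2\|r(x)\|^{\lambda_2-1}>0$).

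The key observation is that the hypothesis $\sigma_{\min}(A)>\|B\|\ge 0$ forces $\sigma_{\min}(A)>0$, so $A$, and hence $A^\top$, is nonsingular. Consequently $g(\gamma,x)=\gamma A^\top r(x)\neq 0$ whenever $r(x)\neq 0$, because $\gamma>0$ and $A^\top$ has trivial kernel. Therefore $-\rho(x)g(\gamma,x)\neq 0$, so no point with $r(x)\neq 0$ can be an equilibrium. Combining the two cases, the equilibrium points of \eqref{stem:model53} are exactly the solutions of GAVE~\eqref{eq:gave}.

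Finally, Lemma~\ref{lem:gaveunique} together with $\sigma_{\min}(A)>\|B\|$ guarantees that GAVE~\eqref{eq:gave} has a unique solution $x_*$ for every $c\in\mathbb{R}^n$; combined with the set equality just established, this yields that $x_*$ is the unique equilibrium point of the dynamic model. I do not anticipate a genuine obstacle: the only point requiring care is the nonsingularity of $A^\top$, which decouples the positive factor $\rho(x)$ from $g(\gamma,x)$ and thereby rules out spurious equilibria at points where $r(x)\neq 0$.
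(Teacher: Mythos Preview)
Your proposal is correct and follows essentially the same approach as the paper: both arguments reduce the equilibrium condition $\rho(x)g(\gamma,x)=0$ to $r(x)=0$ by using the invertibility of $A$ (from $\sigma_{\min}(A)>\|B\|\ge 0$) together with the strict positivity of $\rho(x)$ when $r(x)\neq 0$, and then invoke Lemma~\ref{lem:gaveunique} for uniqueness. Your write-up is in fact a bit more explicit than the paper's in separating the two cases and justifying $\rho(x)>0$.
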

\begin{proof}
If~$x_*$ is an equilibrium point of \eqref{stem:model53}, that is
\begin{equation*}
\rho(x_*)A^\top r(x_*)=0.
\end{equation*}
Since~$\sigma_{\min}(A)>\|B\|$, then~$A$ is invertible and the above equation implies
\begin{equation*}
\rho(x_*)=0\quad\text{or}\quad r(x_*)=0,
\end{equation*}
which together with \eqref{rho} implies
\begin{equation*}
r(x_*)=0,
\end{equation*}
i.e., $x_*$ is a solution to GAVE~\eqref{eq:gave}. If $x_*$ is a solution of GAVE~\eqref{eq:gave}, then it is also an equilibrium point of \eqref{stem:model53}. Hence, $x_*$ is an equilibrium point of \eqref{stem:model53} if and only if it is a solution to GAVE~\eqref{eq:gave}.

Lemma~\ref{lem:gaveunique} implies that GAVE~\eqref{eq:gave} has a unique solution $x_*$ for any $c\in \mathbb{R}^n$ whenever  $\sigma_{\min}(A)>\|B\|$.
\end{proof}

\begin{theorem}\label{thm:L}
For any given $\gamma > 0$, the function $g(\gamma,x)$ defined by \eqref{stem:model53} is Lipschitz continuous on~$\mathbb{R}^n$.
\end{theorem}
\begin{proof}
From~\eqref{stem:model53} and the inequality~$\||x| - |y|\|\leq\|x - y\|$, for any~$x,y\in\mathbb{R}^n$, we have
\begin{align*}
\|g(\gamma,x) - g(\gamma,y)\| &= \|\gamma A^\top (Ax - B|x| - c) - \gamma A^\top (Ay - B|y| - c)\|\\
                              &= \gamma\|A^\top A(x - y) - A^\top B(|x| - |y|)\|\\
                              &\leq\gamma\left(\|A^\top A\|+\|A^\top B\|\right)\|x - y\|.
\end{align*}
Hence, for any given $\gamma > 0$, $g(\gamma,x)$ is Lipschitz continuous on~$\mathbb{R}^n$ with Lipschitz constant~$\gamma\left(\|A^\top A\|+\|A^\top B\|\right)$.
\end{proof}

Combine Theorem~\ref{thm:wucha1}, Lemma~\ref{lem:unique}, Theorem~\ref{equi} and Theorem~\ref{thm:L}, we obtain the following theorem.

\begin{theorem}\label{thm:unique}
Let~$A, B\in\mathbb{R}^{n\times n}$ and $c\in \mathbb{R}^n$. If~$\sigma_{\min}(A)>\|B\|$, then for any given initial condition~$x(0) = x_{0}$, the dynamic model~\eqref{stem:model53} has a unique solution~$x(t;x_{0})$ with~$t\in[0,+\infty)$.
\end{theorem}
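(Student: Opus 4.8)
The plan is to recognize the model~\eqref{stem:model53} as an instance of the abstract system~\eqref{system:solu} in Lemma~\ref{lem:unique} and to verify its three hypotheses for the choice $f(x) := g(\gamma, x) = \gamma A^\top r(x)$. Concretely, I must check that (i) $g(\gamma, \cdot)$ is locally Lipschitz continuous, (ii) $g(\gamma, x_*) = 0$ at the unique solution $x_*$ of GAVE~\eqref{eq:gave}, and (iii) $\langle x - x_*, g(\gamma, x)\rangle > 0$ for every $x \neq x_*$. Once these hold, Lemma~\ref{lem:unique} delivers continuity of the right-hand side of~\eqref{stem:model53} on all of $\mathbb{R}^n$ together with global existence and uniqueness of $x(t; x_0)$ for $t \in [0, +\infty)$.

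Items (i) and (ii) are immediate citations. Theorem~\ref{thm:L} shows $g(\gamma, \cdot)$ is globally (hence locally) Lipschitz. For (ii), the hypothesis $\sigma_{\min}(A) > \|B\|$ makes $A$ invertible, so Theorem~\ref{equi} identifies the unique equilibrium of~\eqref{stem:model53} with the unique solution $x_*$ of GAVE~\eqref{eq:gave}, and then $g(\gamma, x_*) = \gamma A^\top r(x_*) = 0$. For the strict monotonicity (iii), I compute
$$\langle x - x_*, g(\gamma, x)\rangle = \gamma (x - x_*)^\top A^\top (Ax - B|x| - c)$$
and invoke Theorem~\ref{thm:wucha1}. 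Reading off the intermediate estimate in its proof gives the sharper lower bound $\gamma \tfrac{1}{2}(\sigma_{\min}^2(A) - \|B\|^2)\|x - x_*\|^2$, which is strictly positive for $x \neq x_*$ precisely because $\sigma_{\min}(A) > \|B\|$; alternatively one combines Theorem~\ref{thm:wucha1} with the lower error bound of Theorem~\ref{thm:wucha2} to conclude $r(x) \neq 0$ and hence positivity.

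The one point requiring care --- and the only genuine obstacle --- is that the scaling factor $\rho$ in~\eqref{rho} is built from $\|r(x)\|$, whereas Lemma~\ref{lem:unique} is stated with $\rho$ built from $\|f(x)\| = \|g(\gamma, x)\|$, so the two are not literally the same function. I expect to close this gap using the invertibility of $A$: since $\sigma_{\min}(A)\|r(x)\| \leq \|A^\top r(x)\| \leq \|A\|\|r(x)\|$, the quantities $\|g(\gamma, x)\|$ and $\|r(x)\|$ vanish simultaneously and are comparable, so the decisive step in the proof of Lemma~\ref{lem:unique} carries over. In particular, the estimate $\|\rho(x) g(\gamma, x)\| \leq \gamma\|A\|\bigl(\rho_1\|r(x)\|^{\lambda_1} + \rho_2\|r(x)\|^{\lambda_2}\bigr)$ shows the right-hand side of~\eqref{stem:model53} tends to $0$ as $x \to x_*$ (using $\lambda_1 \in (0,1)$ and $\lambda_2 > 1$), which is exactly what is needed to extend continuity and the attendant Lipschitz/monotonicity machinery across the equilibrium. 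Thus, either by invoking Lemma~\ref{lem:unique} after noting this equivalence or by re-running its short argument with $\|r(x)\|$ in place of $\|f(x)\|$, the existence and uniqueness conclusion follows.
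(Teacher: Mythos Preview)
Your proposal is correct and follows the same route as the paper, which simply states that the result follows by combining Theorem~\ref{thm:wucha1}, Lemma~\ref{lem:unique}, Theorem~\ref{equi}, and Theorem~\ref{thm:L}. You are in fact more careful than the paper: the discrepancy you flag between the scaling $\rho$ in~\eqref{rho} (built from $\|r(x)\|$) and the scaling in Lemma~\ref{lem:unique} (built from $\|f(x)\|$) is genuine, and your resolution via the equivalence $\sigma_{\min}(A)\|r(x)\|\le\|A^\top r(x)\|\le\|A\|\|r(x)\|$ is the right way to close it.
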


Now we are in the position to explore the stability for the equilibrium point of the proposed model~\eqref{stem:model53}.

\begin{theorem}\label{thm:gconv}
Let $A, B\in\mathbb{R}^{n\times n}$, $c\in \mathbb{R}^n$ and $\sigma_{\min}(A)>\|B\|$. Then the unique equilibrium point of \eqref{stem:model53} is globally fixed-time stable with the settling-time satisfying
\begin{equation}\label{t:tmax}
T(x_{0})\leq T_{\max}=\dfrac{1}{c_1(1-\kappa_1)}+\dfrac{1}{c_2(\kappa_2-1)},\quad \forall x(0)=x_{0}\in \mathbb{R}^n,
\end{equation}
where $c_1$, $c_2$, $\kappa_1$ and~$\kappa_2$ are defined as in \eqref{bound}. Here, $T:\mathbb{R}^n \rightarrow \mathbb{R}_+$ is the settling-time function.
\end{theorem}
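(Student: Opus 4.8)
The plan is to verify the hypotheses of the fixed-time stability criterion, Lemma~\ref{lem:ft}, using the quadratic Lyapunov function
\[
V(x) = \tfrac{1}{2}\|x - x_*\|^2,
\]
where $x_*$ is the unique equilibrium guaranteed by Theorem~\ref{equi}. This $V$ is continuous, radially unbounded, and satisfies $V(x)=0 \Leftrightarrow x=x_*$, so conditions (1) of Lemma~\ref{lem:ft} hold immediately. By Theorem~\ref{thm:unique} the trajectory $x(t;x_0)$ is uniquely defined on $[0,+\infty)$, so differentiating along it gives
\[
\frac{{\rm d}V(x(t;x_0))}{{\rm d}t} = (x - x_*)^\top \frac{{\rm d}x}{{\rm d}t} = -\rho(x)\,\gamma\,(x - x_*)^\top A^\top r(x).
\]
First I would invoke Theorem~\ref{thm:wucha1}, which yields $(x - x_*)^\top A^\top r(x) \ge \tfrac{1}{2}\|r(x)\|^2$, so that $\dfrac{{\rm d}V}{{\rm d}t} \le -\tfrac{\gamma}{2}\,\rho(x)\|r(x)\|^2$ (the bound holding trivially at $x=x_*$, where both sides vanish).

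Next I would substitute the explicit form~\eqref{rho} of $\rho(x)$ for $r(x)\neq 0$ and simplify the exponents, using $2-(1-\lambda_i)=1+\lambda_i$, to obtain
\[
\rho(x)\|r(x)\|^2 = \rho_1\|r(x)\|^{1+\lambda_1} + \rho_2\|r(x)\|^{1+\lambda_2}.
\]
To convert these powers of the residual into powers of $V$, I would apply the lower error bound of Theorem~\ref{thm:wucha2}, namely $\|r(x)\| \ge (\sigma_{\min}(A)-\|B\|)\|x-x_*\| = (\sigma_{\min}(A)-\|B\|)\sqrt{2V}$, which is valid precisely because $\sigma_{\min}(A)>\|B\|$. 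Raising this to the powers $1+\lambda_1$ and $1+\lambda_2$ and collecting constants gives
\[
\frac{{\rm d}V}{{\rm d}t} \le -c_1 V^{\kappa_1} - c_2 V^{\kappa_2},
\]
where (this is the content of~\eqref{bound}) I would set $\kappa_i = \tfrac{1+\lambda_i}{2}$ and $c_i = \tfrac{\gamma}{2}\rho_i\,2^{\kappa_i}(\sigma_{\min}(A)-\|B\|)^{1+\lambda_i}$ for $i=1,2$.

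Finally I would check the exponent constraints: since $\lambda_1\in(0,1)$ we have $\kappa_1 = \tfrac{1+\lambda_1}{2}\in(\tfrac12,1)$, so $0<\kappa_1<1$; since $\lambda_2>1$ we have $\kappa_2 = \tfrac{1+\lambda_2}{2}>1$; and $c_1,c_2>0$ because all factors are positive. These are exactly the requirements in condition (2) of Lemma~\ref{lem:ft}, so the lemma applies and delivers global fixed-time stability with the settling-time bound~\eqref{t:tmax}. The main point requiring care is the bookkeeping of the exponents and the direction of inequalities: the \emph{lower} bound on $\|r(x)\|$ from Theorem~\ref{thm:wucha2} must be used (since it appears with a negative sign) to produce an \emph{upper} bound on ${\rm d}V/{\rm d}t$, and one must confirm that the resulting $\kappa_1,\kappa_2$ land on the correct sides of $1$; everything else is routine algebraic simplification.
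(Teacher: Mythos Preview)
Your proposal is correct and follows essentially the same approach as the paper: the same quadratic Lyapunov function, the same application of Theorem~\ref{thm:wucha1} and the second inequality of Theorem~\ref{thm:wucha2}, and the same conclusion via Lemma~\ref{lem:ft}. The only cosmetic difference is that you apply Theorem~\ref{thm:wucha1} once before expanding $\rho(x)$, whereas the paper expands $\rho(x)$ first and bounds each term separately; your constants $c_i=\tfrac{\gamma}{2}\rho_i 2^{\kappa_i}(\sigma_{\min}(A)-\|B\|)^{1+\lambda_i}$ agree with the paper's $c_i=2^{(\lambda_i-1)/2}\gamma\rho_i(\sigma_{\min}(A)-\|B\|)^{1+\lambda_i}$.
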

\begin{proof}
Since $\sigma_{\min}(A)>\|B\|$, it follows from Theorem~\ref{equi}  that  \eqref{stem:model53} has a unique equilibrium point $x_*$.

Define
\begin{equation}\label{f:ly}
V(x)=\dfrac{1}{2}\|x - x_*\|^2.
\end{equation}
By~\eqref{f:ly}, we conclude that~$V(x)\rightarrow\infty$ as~$\|x - x_*\|\rightarrow\infty$ and~$V(x)=0$ if and only if~$x=x_*$. Given any~$x(0)=x_{0}\in\mathbb{R}^n\setminus\{x_*\}$, Theorem~\ref{thm:unique} implies that the proposed model~\eqref{stem:model53} has a unique solution~$x=x(t;x_0)$ with $t\ge 0$. Then it follows from~\eqref{rho} and~\eqref{f:ly} that
\begin{align}\nonumber
  \frac{{\rm d}V(x)}{{\rm d}t} &=(x - x_*)^\top\frac{{\rm d}x}{{\rm d}t}=-\left\langle x - x_*,\gamma\rho(x)A^\top r(x)\right\rangle \\\nonumber
   & =-\left\langle x - x_*,\frac{\gamma\rho_1A^\top r(x)}{\|g(\gamma,x)\|^{1 - \lambda_1}} + \frac{\gamma\rho_2A^\top r(x)}{\|g(\gamma,x)\|^{1 - \lambda_2}} \right\rangle\\\label{eq:dv}
   & =-\frac{\gamma\rho_1}{\|g(\gamma,x)\|^{1 - \lambda_1}}\langle x - x_*,A^\top r(x)\rangle -\frac{\gamma\rho_2}{\|g(\gamma,x)\|^{1 - \lambda_2}}\langle x - x_*,A^\top r(x)\rangle.
\end{align}
Apply \eqref{neq:wucha1}, the second inequality of~\eqref{neq:wucha2} and~$\sigma_{\min}(A)>\|B\|$, we obtain
\begin{align}\nonumber
\frac{\gamma\rho_1}{\|g(\gamma,x)\|^{1 - \lambda_1}}\langle x - x_*,A^\top r(x)\rangle&\geq \frac{\gamma\rho_1}{2\|g(\gamma,x)\|^{1 - \lambda_1}}\|r(x)\|^2\geq\frac{\gamma^{\lambda_1}\rho_1\|A\|^{\lambda_1-1}}{2}\|r(x)\|^{\lambda_1+1}\\\label{eq:you1}
&\geq\frac{\gamma^{\lambda_1}\rho_1\|A\|^{\lambda_1-1}(\sigma_{\min}(A) - \|B\|)^{\lambda_1 + 1}}{2}\|x - x_*\|^{\lambda_1 + 1}
\end{align}
and
\begin{align}\nonumber
\frac{\gamma\rho_2}{\|g(\gamma,x)\|^{1 - \lambda_2}}\langle x - x_*,A^\top r(x)\rangle&\geq \frac{\gamma\rho_2}{2\|g(\gamma,x)\|^{1 - \lambda_2}}\|r(x)\|^2\geq\frac{\gamma^{\lambda_2}\rho_2\sigma_{\min}(A)^{\lambda_2-1}}{2}\|r(x)\|^{\lambda_2+1}\\\label{eq:you}
&\geq\frac{\gamma^{\lambda_2}\rho_2\sigma_{\min}(A)^{\lambda_2-1}(\sigma_{\min}(A) - \|B\|)^{\lambda_2 + 1}}{2}\|x - x_*\|^{\lambda_2 + 1}.
\end{align}
From~\eqref{eq:dv}--\eqref{eq:you}, we have
\begin{align}\small\nonumber
  \frac{{\rm d}V(x)}{{\rm d}t} &\leq-\frac{\gamma^{\lambda_1}\rho_1\|A\|^{\lambda_1-1}(\sigma_{\min}(A) - \|B\|)^{\lambda_1 + 1}}{2}\|x - x_*\|^{\lambda_1 + 1}\\\nonumber
    &\quad -\frac{\gamma^{\lambda_2}\rho_2\sigma_{\min}(A)^{\lambda_2-1}(\sigma_{\min}(A) - \|B\|)^{\lambda_2 + 1}}{2}\|x - x_*\|^{\lambda_2 + 1}\\\nonumber
    &=-2^{\frac{\lambda_1-1}{2}}\gamma^{\lambda_1}\rho_1\|A\|^{\lambda_1-1}(\sigma_{\min}(A) - \|B\|)^{\lambda_1 + 1}\left(\frac{1}{2}\|x - x_*\|^2\right)^{\frac{\lambda_1+1}{2}} \\\nonumber
   &\quad -2^{\frac{\lambda_2-1}{2}}\gamma^{\lambda_2}\rho_2\sigma_{\min}(A)^{\lambda_2-1}(\sigma_{\min}(A) - \|B\|)^{\lambda_2 + 1}\left(\frac{1}{2}\|x - x_*\|^2\right)^{\frac{\lambda_2+1}{2}} \\\label{neq:v(x)}
   &= -c_1V(x)^{\kappa_1} - c_2V(x)^{\kappa_2},
\end{align}
where
\begin{equation}\label{bound}
\begin{array}{cc}
   c_1 = 2^{\frac{\lambda_1-1}{2}}\gamma^{\lambda_1}\rho_1\|A\|^{\lambda_1-1}(\sigma_{\min}(A) - \|B\|)^{\lambda_1 + 1}>0,& \kappa_1 = \frac{\lambda_1+1}{2}\in(0.5,1), \\
   \quad~~ c_2 =2^{\frac{\lambda_2-1}{2}}\gamma^{\lambda_2}\rho_2\sigma_{\min}(A)^{\lambda_2-1}(\sigma_{\min}(A) - \|B\|)^{\lambda_2 + 1}>0, & ~\kappa_2 = \frac{\lambda_2+1}{2}\in(1,+\infty).
\end{array}
\end{equation}
Then the proof is completed by Lemma~\ref{lem:ft}.
\end{proof}

Theorem~\ref{thm:gconv} establishes the fixed-time stability of the equilibrium point for the dynamical model~\eqref{stem:model53}. From~\eqref{t:tmax} and \eqref{bound}, the upper bound $T_{\max}$ of the settling time for the equilibrium point depends on the parameters $\gamma$, $\rho_1$, $\rho_2$, $\lambda_1$ and $\lambda_2$. As shown in the following corollary, by specially choosing $\rho_1$, $\rho_2$, $\lambda_1$, and $\lambda_2$, the upper bound $T_{\max}$ of the settling time for the equilibrium point can be directly determined by the value of $\gamma$.

\begin{corollary}\label{cor:xi}
Let $c\in \mathbb{R}^n$ and  $A,B\in\mathbb{R}^{n\times n}$ with $\sigma_{\min}(A)>\|B\|$. Given any $\xi >1$, let $\lambda_1=1 - \frac{1}{\xi}$, $\lambda_2=1 + \frac{1}{\xi}$, and $\rho_1\rho_2=\frac{\xi^2\pi^2}{\left(\frac{\sigma_{\min}(A)}{\|A\|}\right)^{\frac{1}{\xi}}(\sigma_{\min}(A) - \|B\|)^4}$.  Then, the unique equilibrium point $x_*$ of the system \eqref{stem:model53} is globally fixed-time stable with the settling-time satisfying
\begin{equation*}\label{ie:tg}
T(x_{0})\leq T_{\max}< \dfrac{1}{\gamma}.
\end{equation*}
\end{corollary}
\begin{proof}
Under the conditions of the corollary, the inequality \eqref{neq:v(x)} holds. Given any $\xi >1$,  substituting $\lambda_1=1 - \frac{1}{\xi}$ and $\lambda_2=1 + \frac{1}{\xi}$ into \eqref{neq:v(x)}, we have
\begin{equation} \label{eq:vx}
  \frac{{\rm d}V(x)}{{\rm d}t} \leq -c_1V(x)^{\frac{2\xi-1}{2\xi}} - c_2V(x)^{\frac{2\xi+1}{2\xi}}.
\end{equation}

Let $z=V(x)^{-\frac{1}{2\xi}}>0$. Then $V(x)=z^{-2\xi}$, and
\begin{equation}\label{neq:weifen}
\frac{{\rm d}V(x)}{{\rm d}t}=-2\xi z^{-2\xi-1}\frac{{\rm d}z}{{\rm d}t}=-2\xi V(x)^{\frac{2\xi+1}{2\xi}}\frac{{\rm d}z}{{\rm d}t}.
\end{equation}
Substituting~\eqref{neq:weifen} into~\eqref{eq:vx}, we have
\begin{equation*}
-2\xi V(x)^{\frac{2\xi+1}{2\xi}}\frac{{\rm d}z}{{\rm d}t}\leq-c_1V(x)^{\frac{2\xi-1}{2\xi}} - c_2V(x)^{\frac{2\xi+1}{2\xi}},
\end{equation*}
that is,
\begin{equation*}\label{eq:itegr}
2\xi \frac{{\rm d}z}{{\rm d}t}\geq c_1V(x)^{-\frac{1}{\xi}} + c_2=c_1 z^2+c_2.
\end{equation*}

For any given $T_{\max}\geq0$, we have
\begin{equation*}\label{weifen1}
\frac{2\xi}{c_2}\int_{z(0)}^{+\infty} \frac{{\rm d}z}{1 + \left(\sqrt{\frac{c_1}{c_2}}z\right)^2}\ge\int_0^{T_{\max}} {\rm d}t,
\end{equation*}
which implies that
\begin{equation*}\label{con:c}
T_{\max}\leq\frac{2\xi\left(\frac{\pi}{2} - \arctan(\sqrt{\frac{c_1}{c_2}}z(0))\right)}{\sqrt{c_1c_2}}< \frac{\xi\pi}{\sqrt{c_1c_2}},
\end{equation*}
in which the second inequality follows from $z(0) = V(x(0))^{-\frac{1}{2\xi}} >0$.

Since~$\lambda_1=1 - \frac{1}{\xi}$ and~$\lambda_2=1 + \frac{1}{\xi}$, then we have $c_1c_2=\gamma^2\rho_1\rho_2\|A\|^{ - \frac{1}{\xi}}\sigma_{\min}(A)^{\frac{1}{\xi}}(\sigma_{\min}(A) - \|B\|)^4$. Hence, if $\rho_1\rho_2=\frac{\xi^2\pi^2}{\left(\frac{\sigma_{\min}(A)}{\|A\|}\right)^{\frac{1}{\xi}}(\sigma_{\min}(A) - \|B\|)^4}$, we have
$$
T_{\max}< \dfrac{\xi\pi}{\sqrt{c_1c_2}}=\dfrac{\xi\pi}{\gamma\left(\dfrac{\sigma_{\min}(A)}{\|A\|}\right)^{\frac{1}{2\xi}}\sqrt{\rho_1\rho_2}(\sigma_{\min}(A) - \|B\|)^2}=\dfrac{1}{\gamma}.
 $$
This proof is completed.
\end{proof}

\section{$(T,\epsilon)$-close discrete-time approximation scheme}\label{sec:fed}
Continuous-time dynamical systems provide a natural and intuitive way to speed up algorithms. However, in practice, a discrete-time implementation is used \cite{gbgb2022}. In general, the fixed-time convergence behavior of the continuous-time dynamical system might not be preserved in the discrete-time version. A consistent discrete-time approximation scheme preserves the convergence behavior of the continuous-time dynamical system in the discrete-time setting \cite{gbgb2022}.  Polyakov et al. \cite{poeb2019} present a consistent semi-implicit discretization for practically fixed-time stable systems. Zhang et al. \cite{zhbr2025} show the closeness between solutions for the proposed continuous flows and the trajectories of their forward Euler discretization. Inspired by \cite{zhbr2025,gbgb2022}, this section gives sufficient conditions that lead to an explicit $(T,\epsilon)$-close discrete-time approximation scheme for the fixed-time stable system \eqref{stem:model53}.

By using the forward-Euler discretization of \eqref{stem:model53}, we propose the following iteration method for solving GAVE~\eqref{eq:gave}:
\begin{equation}\label{eq:disc}
x^{(k+1)} = x^{(k)} - \eta\rho(x^{(k)})g(\gamma,x^{(k)}),\quad k = 0,1,2,\ldots,
\end{equation}
where~$\eta>0$ is the time-step, $\rho(x)$ and $g(\gamma,x)$ are defined as in \eqref{stem:model53}. For \eqref{eq:disc}, define $x_d: \{0,1,2,\ldots,\}\rightarrow \mathbb{R}^n$ as
$$
x_d(i) = x^{(i)}, \quad i = 0,1,2,\ldots.
$$

\begin{theorem}\label{thm:disc}
Let $c\in \mathbb{R}^n$ and $A,B\in\mathbb{R}^{n\times n}$ with $\sigma_{\min}(A)>\|B\|$. For any given $\xi>1$, assume that $\lambda_1=1 - \frac{1}{\xi}$, $\lambda_2=1 + \frac{1}{\xi}$ and  $\rho_1\rho_2=\frac{\xi^2\pi^2}{\left(\frac{\sigma_{\min}(A)}{\|A\|}\right)^{\frac{1}{\xi}}(\sigma_{\min}(A) - \|B\|)^4}$. Then, for any given $x^{(0)} \in \mathbb{R}^n$ and $\epsilon>0$, there exists $\eta_* > 0$ such that for any $\eta \in (0,\eta_*]$, we have
\begin{equation}\label{eq:xbar}
\|x^{(k)} -x_*\|\leq
\begin{cases}\small
                   \begin{array}{ll}
                     \sqrt{2}\left(\frac{\xi\pi}{\rho_2(\sigma_{\min}(A)-\|B\|)^2}\left(\frac{1}{\sqrt{2}\gamma\sigma_{\min}(A)(\sigma_{\min}(A)-\|B\|)}\right)^{\frac{1}{\xi}}
                     \tan\left(\frac{\pi}{2} -\frac{\pi\eta\gamma}{2} k\right)\right)^\xi+\epsilon, &k\le k_*, \\
                     \epsilon, & \text{otherwise},
                   \end{array}
\end{cases}
\end{equation}
where $k_* = \left \lceil \frac{1}{\eta\gamma}\right\rceil$ and $x_*$ is the unique equilibrium point of~\eqref{stem:model53}.
\end{theorem}

\begin{proof}
The proof is inspired by those of \cite[Theorem 2]{gbgb2022} and \cite[Theorem~3.2]{zhbr2025}.

Select $x^{(0)}=x(0)\in \mathbb{R}^n$ and $x(0)\neq x_*$, there is a unique solution $x = x(t; x(0))$ of \eqref{stem:model53} with $t\ge 0$. Once again, let $z=V(x)^{-\frac{1}{2\xi}}$ with $V(x)$ being defined by \eqref{f:ly}.

It follows from the proof of Corollary~\ref{cor:xi} that
\begin{equation}\label{weifen1}
\frac{2\xi}{c_2}\frac{{\rm d}z}{1 + \left(\sqrt{\frac{c_1}{c_2}}z\right)^2}\geq{\rm d}t.
\end{equation}

For any given $T>0$, integrating both sides of \eqref{weifen1} yields
\begin{equation}\label{weifen2}
\frac{2\xi}{c_2}\int_{z(0)}^{z(T)} \frac{{\rm d}z}{1 + \left(\sqrt{\frac{c_1}{c_2}}z\right)^2}\geq\int_0^T {\rm d}t.
\end{equation}
From \eqref{weifen2} and $z=V(x)^{-\frac{1}{2\xi}}$, we have
\begin{align}\nonumber
V(x(T))&\leq\left(\sqrt{\frac{c_1}{c_2}}\frac{1}{\tan\left(\frac{\sqrt{c_1c_2}}{2\xi}(T+C)\right)}\right)^{2\xi}\\\nonumber
&=\left(\sqrt{\frac{c_1}{c_2}}\cdot\frac{1}{\frac{\tan\big(\frac{\sqrt{c_1c_2}}{2\xi}T\big)+\tan\big(\frac{\sqrt{c_1c_2}}{2\xi}C\big)}{1 - \tan\big(\frac{\sqrt{c_1c_2}}{2\xi}T\big)\tan\big(\frac{\sqrt{c_1c_2}}{2\xi}C\big)}}\right)^{2\xi}\\\label{neq:vx}
      &=\left(\sqrt{\frac{c_1}{c_2}}\cdot\frac{1 -\tan\big(\frac{\sqrt{c_1c_2}}{2\xi}T\big)\tan\big(\frac{\sqrt{c_1c_2}}{2\xi}C\big)}{\tan\big(\frac{\sqrt{c_1c_2}}{2\xi}T\big)
      +\tan\big(\frac{\sqrt{c_1c_2}}{2\xi}C\big)}\right)^{2\xi},
\end{align}
where
\begin{equation*}\label{eq:c}
C=\frac{2\xi}{\sqrt{c_1c_2}}\arctan\left(\sqrt{\frac{c_1}{c_2}}V(x(0))^{-\frac{1}{2\xi}}\right).
\end{equation*}
Since
\begin{equation*}
\tan\left(\frac{\sqrt{c_1c_2}}{2\xi}C\right)=\tan\left(\frac{\sqrt{c_1c_2}}{2\xi}\cdot\frac{2\xi}{\sqrt{c_1c_2}}\arctan\left(\sqrt{\frac{c_1}{c_2}}V(x(0))^{-\frac{1}{2\xi}}\right)\right)
=\sqrt{\frac{c_1}{c_2}}V(x(0))^{-\frac{1}{2\xi}}>0,
\end{equation*}
it follows from  \eqref{neq:vx} that
\begin{align}\nonumber
  V(x(T))&\leq\left(\sqrt{\frac{c_1}{c_2}}\cdot\frac{1 -\tan\big(\frac{\sqrt{c_1c_2}}{2\xi}T\big)\sqrt{\frac{c_1}{c_2}}V(x(0))^{-\frac{1}{2\xi}}}{\tan\big(\frac{\sqrt{c_1c_2}}{2\xi}T\big)
      +\sqrt{\frac{c_1}{c_2}}V(x(0))^{-\frac{1}{2\xi}}}\right)^{2\xi}\\\nonumber
      &=\left(\sqrt{\frac{c_1}{c_2}}\cdot\frac{\sqrt{\frac{c_2}{c_1}}V(x(0))^{\frac{1}{2\xi}} -\tan\big(\frac{\sqrt{c_1c_2}}{2\xi}T\big)}{1 + \sqrt{\frac{c_2}{c_1}}V(x(0))^{\frac{1}{2\xi}}\tan\big(\frac{\sqrt{c_1c_2}}{2\xi}T\big)}\right)^{2\xi}\\\nonumber
      &=\left(\sqrt{\frac{c_1}{c_2}}\tan\left(\arctan\left(\sqrt{\frac{c_2}{c_1}}V(x(0))^{\frac{1}{2\xi}}\right) -\frac{\sqrt{c_1c_2}}{2\xi}T\right)\right)^{2\xi}.\\\label{neq:x}
      &\le \left(\sqrt{\frac{c_1}{c_2}}\tan\left(\frac{\pi}{2} -\frac{\sqrt{c_1c_2}}{2\xi}T\right)\right)^{2\xi}.
\end{align}
If $T_{\max} = \frac{\pi \xi}{\sqrt{c_1c_2}}$, if follows from \eqref{neq:x} that $V(x(T_{\max})) = 0$. Moreover, it follows from \eqref{neq:v(x)} that $V(x(t)) \equiv 0, \forall t\ge T_{\max}$. Then, for any given $x(0)\in \mathbb{R}^n$, we have
\begin{equation}\label{eq:xtbar1}
\|x(t;x(0)) - x_*\|\leq\left\{
  \begin{array}{ll}
    \sqrt{2}\left(\sqrt{\frac{c_1}{c_2}}\tan\left(\frac{\pi}{2} -\frac{\sqrt{c_1c_2}}{2\xi}t\right)\right)^\xi, & 0\leq t<T_{\max}, \\
    0, & \text{otherwise}.
  \end{array}
\right.
\end{equation}
Since $\sqrt{c_1c_2}=\gamma\xi\pi$ and $\sqrt{\frac{c_1}{c_2}}=\frac{\xi\pi}{\rho_2(\sigma_{\min}(A)-\|B\|)^2}\left(\frac{1}{\sqrt{2}\gamma\sigma_{\min}(A)(\sigma_{\min}(A)-\|B\|)}\right)^{\frac{1}{\xi}}$, then from \eqref{eq:xtbar1} we have
\begin{equation}\label{eq:xtbar2}\small
\|x(t;x(0)) - x_*\|\leq\left\{
  \begin{array}{ll}
    \sqrt{2}\left(\frac{\xi\pi}{\rho_2(\sigma_{\min}(A)-\|B\|)^2}\left(\frac{1}{\sqrt{2}\gamma\sigma_{\min}(A)(\sigma_{\min}(A)-\|B\|)}\right)^{\frac{1}{\xi}}
    \tan\left(\frac{\pi}{2} -\frac{\gamma\pi}{2}t\right)\right)^\xi, & 0\leq t<\frac{1}{\gamma}, \\
    0, & \text{otherwise}.
  \end{array}
\right.
\end{equation}

From~\eqref{stem:model53} and~\eqref{rho}, we know that, as a function of $x$, $X_c(x) = -\rho(x)g(\gamma,x)$ is continuous and thus locally bounded in~$\mathbb{R}^n$. Now consider the forward-Euler discretization system~\eqref{eq:disc}. The mapping $X_d$ in this case is given by
$$
X_d(x) = x - \eta \rho(x)g(\gamma,x)
$$
with $\eta>0$, which clearly satisfies condition \eqref{eq:tec}. Hence, with Lemma~\ref{lem:epsilon} we can conclude about the $(T,\epsilon)$-closeness between the continuous-time solution $x(t;x(0))$ of \eqref{stem:model53} and the disctete-time solution $x_d(k)$ of \eqref{eq:disc}, provided that  $x^{(0)} = x(0)$ .

For any given $T\ge 0$, let $\tilde{k}_* = \lfloor \frac{T}{\eta}\rfloor$, according to Definition~\ref{defn:close}, for each $k\in \{0,1,2,\ldots, \tilde{k}_*\}$, there exits a $t\in [0,T]$ such that $|t-\eta k| < \epsilon$ and $\|x(t;x(0)) - x_d(k)\|<\epsilon$. Then for any $k\in \{0,1,2,\ldots\}$, we can conclude \eqref{eq:xbar} by  \eqref{eq:xtbar2} and $\|x^{(k)} - x_*\| \le \|x(t;x(0)) - x_*\|+ \|x^{(k)}-x(t;x(0))\|$ with $t = \eta k$.
\end{proof}

Next, we use a one-dimensional numerical example to demonstrate the closeness between the analytical solution of model \eqref{stem:model53} and its forward Euler discretization \eqref{eq:disc}.

\begin{example}\label{remark}
Consider GAVE \eqref{eq:gave} with $A=2$, $B=1$ and $c=1$, where $x\in\mathbb{R}$. It is obvious that $\sigma_{\min}(A)=2>\|B\|=1$, thus this GAVE has a unique solution $x_*=1$. For model \eqref{stem:model53} and method \eqref{eq:disc}, let $\gamma=10$, $\rho_1=1$, $\xi=2$ and $\rho_2=\frac{\xi^2\pi^2}{\left(\frac{\sigma_{\min}(A)}{\|A\|}\right)^{\frac{1}{\xi}}(\sigma_{\min}(A) - \|B\|)^4}=4\pi^2$. Then model \eqref{stem:model53} becomes
\begin{equation}\label{modelexam}
\frac{{\rm d}x}{{\rm d}t} = -20\cdot\rho(x)(2x - |x| - 1) ,
\end{equation}
where
\begin{equation}\label{rhoexam}
\rho(x)=\left\{
                      \begin{array}{ll}
                       \frac{1}{\sqrt{20\cdot|2x - |x| - 1|}} + 4\pi^2\sqrt{20\cdot|2x - |x| - 1|}, &  \text{if}~x\neq 1, \\
                        0, &  \text{if}~x = 1.
                      \end{array}
                    \right.
\end{equation}
Then the analytical solutions of model \eqref{modelexam} is:
\begin{itemize}
  \item[{\rm(1)}] $$
           x(t)=\left\{
                \begin{array}{ll}
                  1 + \dfrac{1}{80\pi^2}\tan^2\left(-20\pi t + \arctan\left(\sqrt{80(x(0)-1)}\cdot\pi\right)\right), & t\le T_{x(0)},  \\
                  1, & T_{x(0)}<t\le T,
                \end{array}
              \right.
      $$
where $x(0)>1$ and $T_{x(0)}=\dfrac{1}{20\pi}\arctan\left(\sqrt{80(x(0) - 1)}\cdot\pi\right)$;
  \item[{\rm(2)}] $$
           x(t)=\left\{
                \begin{array}{ll}
                  1 - \dfrac{1}{80\pi^2}\tan^2\left(-20\pi t + \arctan\left(\sqrt{80(1 - x(0))}\cdot\pi\right)\right), & t\le T_{x(0)},  \\
                  1, & T_{x(0)}<t\le T,
                \end{array}
              \right.
      $$
where $x(0)\in[0,1)$ and $T_{x(0)}=\dfrac{1}{20\pi}\arctan\left(\sqrt{80(1 - x(0))}\cdot\pi\right)$;
  \item[{\rm(3)}] $$
           x(t)=\left\{
                \begin{array}{ll}
                  \dfrac{1}{3} - \dfrac{1}{240\pi^2}\tan^2\left(-60\pi t + \arctan\left(\sqrt{80 - 240x(0)}\cdot\pi\right)\right), & t\le T_{x(0)},  \\
                  1 - \dfrac{1}{80\pi^2}\tan^2\left(-20\pi t + \arctan\left(\sqrt{80}\cdot\pi\right) + 20\pi T_{x(0)}\right), & T_{x(0)}<t\le\hat{T}, \\
                  1, & \hat{T}<t\le T,
                \end{array}
              \right.
      $$
where $x(0)<0$, $T_{x(0)}=\frac{\arctan\left(\sqrt{80 - 240x(0)}\cdot\pi\right) - \arctan\left(\sqrt{80}\cdot\pi\right)}{60\pi}$ and $\hat{T}=\frac{1}{20\pi}\arctan\left(\sqrt{80}\cdot\pi\right) + T_{x(0)}$.
\end{itemize}

For the system \eqref{modelexam}, we perform forward-Euler discretization with $\eta = 10^{-9}$. In this case, the discrete iterative method derived from \eqref{modelexam} is as follows:
\begin{equation}\label{eq:discexam}
x^{(k+1)} = x^{(k)} - 2\times10^{-8}\cdot\rho(x^{(k)})(2x^{(k)} - |x^{(k)}| - 1)~\text{for}~k\in\{0,1,2,\cdots,k_*\},
\end{equation}
in which $k_*= \lceil\frac{1}{10\eta}\rceil$ and $\rho(x)$ is defined as in \eqref{rhoexam}. As Figure \ref{figcompre} shows, the sequence $\{x^{(k)}\}_{k=0}^{k_*}$ generated by the method \eqref{eq:discexam} and the trajectory of $x(t)$ generated by the system \eqref{modelexam} satisfy Definition \ref{defn:close} with $t=k\eta$, that is $x^{(k)}$ and $x(t)$ are $(T,\epsilon)$-close with $\epsilon=10^{-5}$.

\begin{figure}[!htbp]
{\centering
\begin{tabular}{ccc}
\hspace{-0.15 cm}
\resizebox*{0.32\textwidth}{0.18\textheight}{\includegraphics{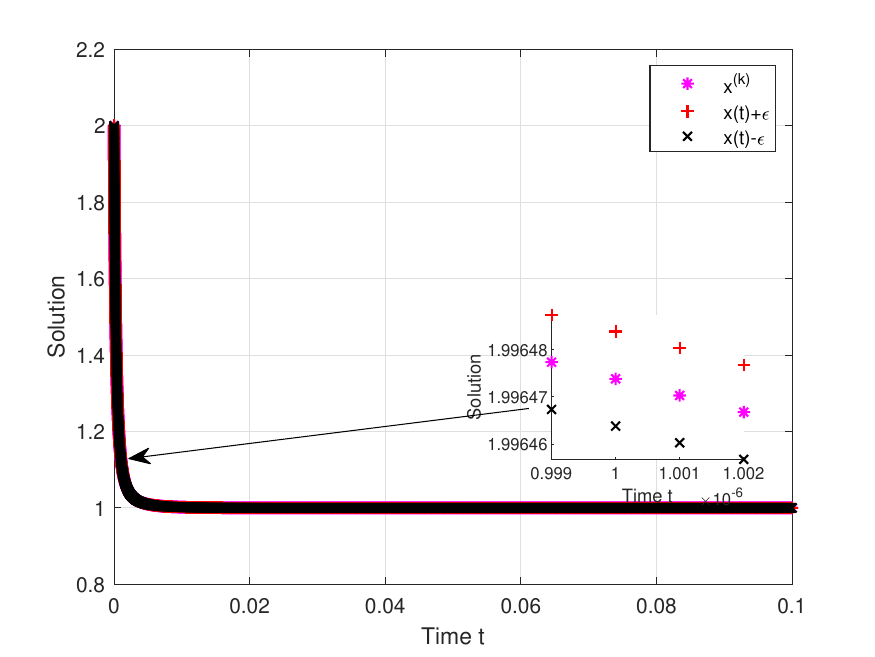}}
&\resizebox*{0.32\textwidth}{0.18\textheight}{\includegraphics{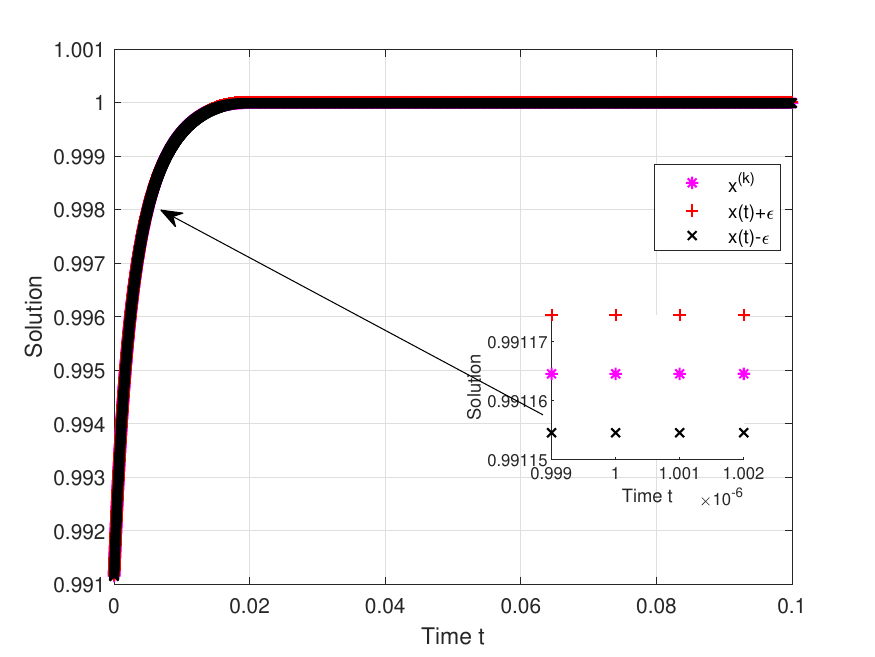}}
&\resizebox*{0.32\textwidth}{0.18\textheight}{\includegraphics{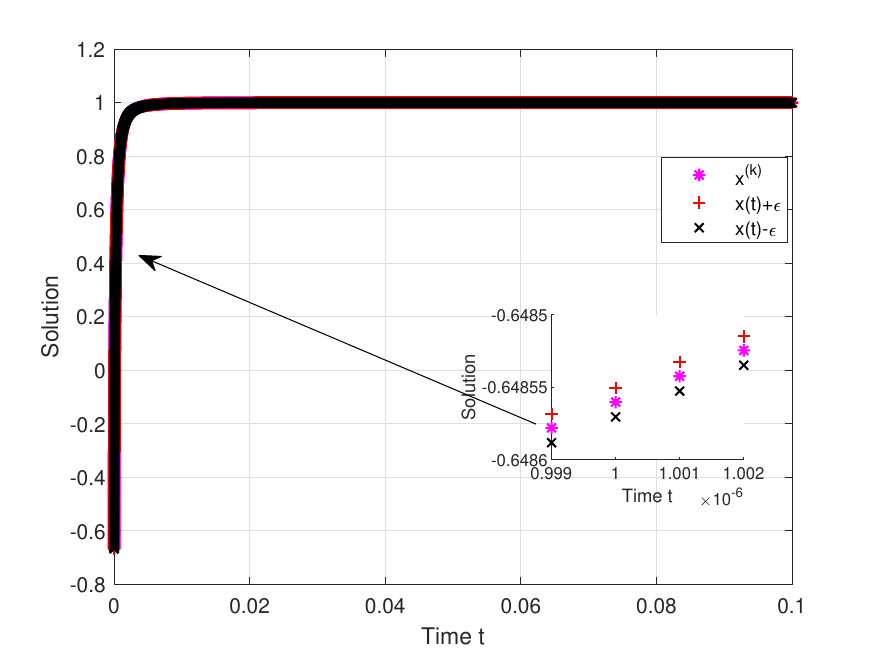}}
\end{tabular}\par
}\vspace{-0.15 cm}
\caption{The trajectory of $x(t)$ generated by the system \eqref{modelexam} with $t=\eta k$, and the sequence $\{x^{(k)}\}_{k=0}^{k_*}$ generated by the method \eqref{eq:discexam} with $x^{(0)}=x(0)$. For the left image, $x(0) = 2$ is used; for the middle image, $x(0) = 1 - \dfrac{1}{80\sqrt{2}}$; and for the right image, $x(0) = -\frac{2}{3}$.}
\label{figcompre}
\end{figure}
\end{example}

\begin{remark}{\rm
Theorem~{\rm\ref{thm:disc}} shows that the sequence $\{x^{(k)}\}_{k=0}^{\infty}$ generated by the method~\eqref{eq:disc} derived from the continuous dynamical system~\eqref{stem:model53} converges to an $\epsilon$-neighborhood of the unique solution~$x_*$ of GAVE~\eqref{eq:gave} within $k_*$-steps. When~$k_* = \lceil\frac{1}{\eta \gamma}\rceil$ is smaller, the sequence $\{x^{(k)}\}_{k=0}^{\infty}$ will converge to an $\epsilon$-neighborhood of~$x_*$ faster later on. Corollary~{\rm\ref{cor:xi}} indicates that the equilibrium point of the continuous dynamical system~\eqref{stem:model53} is globally stable at~$x_*$ within a fixed time $T_{\max} < \frac{1}{\gamma}$.  Hence, the convergence behavior of the sequence $\{x^{(k)}\}_{k=0}^{\infty}$ generated by the method~\eqref{eq:disc} may come close to mimicking the stability behavior of the equilibrium point of the continuous dynamical model~\eqref{stem:model53}.
}
\end{remark}

\section{Numerical experiments}\label{sec4:numerical}
In this section, two numerical examples are presented to demonstrate the validity and efficiency of the continuous dynamical system~\eqref{stem:model53} and its forward Euler discretization scheme~\eqref{eq:disc} for solving GAVE~\eqref{eq:gave}. All methods are implemented in MATLAB R$2021$a for Windows $11$ on a personal computer with Intel(R) Corel(TM) i$5$-$11320$H CPU @ $3.20$ GHz and $16$ GB memory.

Firstly, we test the performance of the dynamical system~\eqref{stem:model53} for solving a GAVE with $B$ being singular. The model~\eqref{eq:nn4gave} and the model in \cite{bsnc2019} (denoted it as model SM) with the smoothing functions $\phi_i$ and $i\in\{1,2,\cdots,8\}$ are compared. For dynamical models, the ODE solver ``ode45'' is used as follows:
\begin{center}
[$t$,$x$] = {\rm ode}45(odefun,tspan,$x_{0}$).
\end{center}

\begin{example}\label{exam1}{\rm
Consider GAVE \eqref{eq:gave} with
$$
A =\begin{bmatrix}
                                     8&-1&0&\cdots&0&0\\
                                     -1&8&-1&\cdots&0&0\\
                                     \vdots&\vdots&\vdots&\ddots&\vdots&\vdots\\
                                     0&0&0&\cdots&8&-1\\
                                     0&0&0&\cdots&-1&8\end{bmatrix},\quad B=I - \dfrac{vv^\top}{\|v\|^2},\quad v=[-\frac{1}{2},1,\cdots,-\frac{1}{2},1]^\top
$$
and $c=Ax_*-B|x_*|$, where $x_*=[\frac{1}{2},1,\frac{1}{2},1,\cdots,\frac{1}{2},1]^\top\in \mathbb{R}^n$.

For model~\eqref{stem:model53}, we set $\gamma=10$, $\xi=10$, $\rho_1=100$, $\rho_2=\frac{\xi^2\pi^2}{\rho_1\left(\frac{\sigma_{\min}(A)}{\|A\|}\right)^{\frac{1}{\xi}}(\sigma_{\min}(A) - \|B\|)^4}$, and $x_{0}=[0,0,0,0,\dots,0,0]^\top\in\mathbb{R}^{20}$, then~$T_{\max}=\frac{1}{\gamma}=0.1$. For the model~\eqref{eq:nn4gave}, $\rho=100$ and~$z_{0} = x_{0}$. For the model SM, $\rho_{\rm SM}=1$ and $\mu_0=0.3$. Numerical simulations are shown in Figure~{\rm\ref{fig5}}, from which we know that model~\eqref{stem:model53} converges faster than  model~\eqref{eq:nn4gave} and the model SM.

\begin{figure}[!htbp]\small
{\centering
\begin{tabular}{ccc}
\hspace{-0.3 cm}
\resizebox*{0.45\textwidth}{0.3\textheight}{\includegraphics{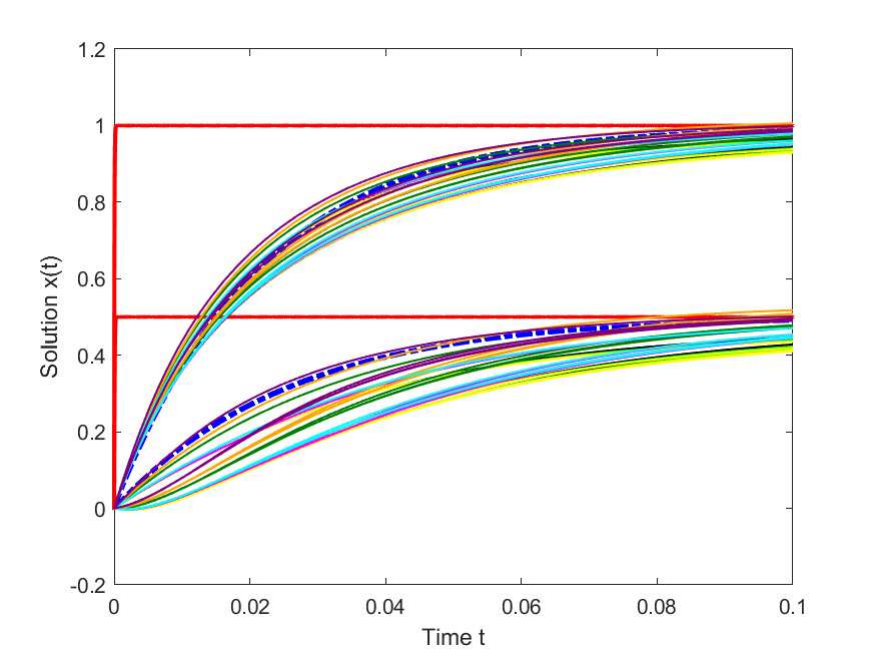}}
& \hspace{-0.9 cm}
\resizebox*{0.45\textwidth}{0.3\textheight}{\includegraphics{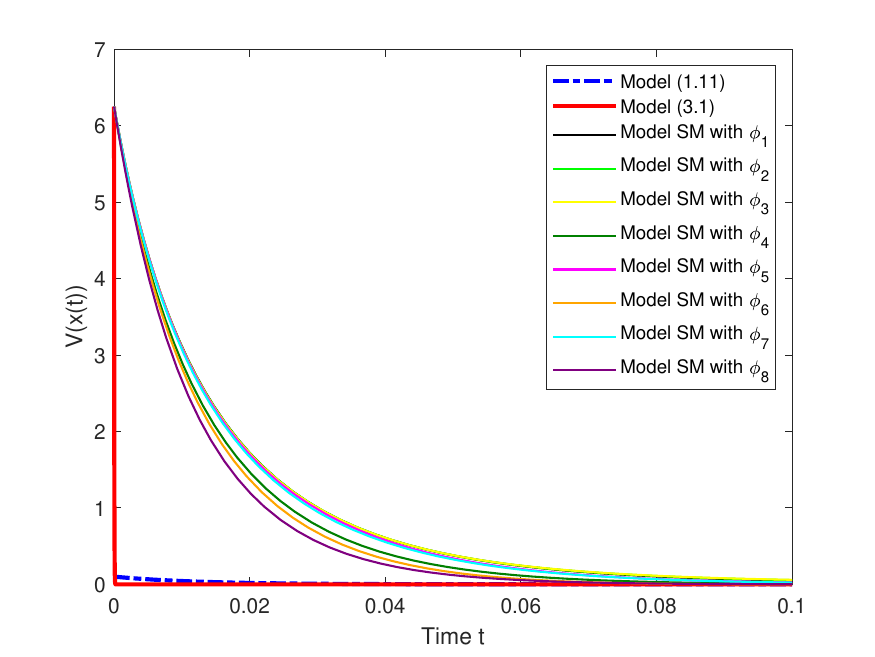}} \vspace{2ex}
\end{tabular}\par
}\vspace{-0.05 cm}
\caption{Phase diagrams for the model \eqref{stem:model53} (red solid line), the model \eqref{eq:nn4gave} (blue dotted line) and the model SM with the smoothing functions $\phi_i$ and $i\in\{1,2,\cdots,8\}$ respectively. Here, tspan = $[0 : T_{\max}]$.}
\label{fig5}
\end{figure}

}
\end{example}

In the following, we will test the performance of forward Euler discretization scheme~\eqref{eq:disc} (we set $\eta=10^{-8}$,~$\rho_1=1000,~\rho_2=\frac{\xi^2\pi^2}{\rho_1\left(\frac{\sigma_{\min}(A)}{\|A\|}\right)^{\frac{1}{\xi}}(\sigma_{\min}(A) - \|B\|)^4}$, $\xi=10$ and $\gamma=100$) for solving GAVE~\eqref{eq:gave}. The following nine methods are compared.

\begin{description}

  \item \textbf{GN}: the generalized Newton method \cite{mang2009a,huhz2011}
  \begin{equation*}
  x^{(k+1)}=\left[A-B\mathcal{D}(x^{(k)})\right]^{-1}c.
  \end{equation*}

  \item \textbf{Picard}: the Picard iteration method \cite{rohn2014}
  \begin{equation*}
    x^{(k+1)} = A^{-1} (B|x^{(k)}| + c).
  \end{equation*}

  \item \textbf{MN}: the modified Newton iteration method \cite{wacc2019}
  \begin{equation*}
    x^{(k+1)} = x^{(k)} - (A+\Omega)^{-1}(Ax^{(k)} - B|x^{(k)}| - c),
  \end{equation*}
where~$\Omega$ is a positive semi-definite matrix.

  \item \textbf{SSMN}: the shift-splitting MN iteration method \cite{liyi2021}
  \begin{equation*}
    x^{(k+1)} = x^{(k)} - 2(A+\Omega)^{-1}(Ax^{(k)} - B|x^{(k)}| - c),
  \end{equation*}
where~$\Omega$ is a positive semi-definite matrix.

  \item \textbf{FPI}: the fixed point iteration method \cite{keyf2020,lild2022}
 \begin{equation*}\label{eq:fpi4gave}
\begin{cases}
x^{(k+1)}=A^{-1}(B y^{(k)} + c),\\
y^{(k+1)}=(1-\omega_{\rm FPI})y^{(k)} + \omega_{\rm FPI}|x^{(k+1)}|,
\end{cases}
\end{equation*}
 where~$\omega_{\rm FPI}$ is a positive constant.
 \item \textbf{MFPI}: the modified fixed point iteration method \cite{lich2025b}:
 \begin{equation*}\label{eq:mfpi4gave}
\begin{cases}
x^{(k+1)}= A^{-1}(BQ_{\rm MFPI} y^{(k)} + c ),\\
y^{(k+1)}= (1-\omega_{\rm MFPI} )y^{(k)} + \omega_{\rm MFPI} Q^{-1}_{\rm MFPI}|x^{(k+1)}|,
\end{cases}
\end{equation*}
 where $\omega_{\rm MFPI}$ is a positive constant and $Q_{\rm MFPI}=10.5I$.
  \item \textbf{SOR}: the SOR-like iteration method \cite{kema2017,lich2025b}
\begin{equation*}\label{it:sor4gave}
		\begin{cases}
		    x^{(k+1)}=(1-\omega_{\rm SOR})x^{(k)} + \omega_{\rm SOR} A^{-1}\left( B y^{(k)} + c\right),\\
			y^{(k+1)}=(1-\omega_{\rm SOR})y^{(k)} + \omega_{\rm SOR} |x^{(k+1)}|,
		\end{cases}
\end{equation*}
where~$\omega_{\rm SOR}>0$.

\item \textbf{TS}: the two-step iteration method \cite{zzlf2022}:
\begin{equation*}\label{it:two-step4gave}
  x^{(k+1)} = A^{-1}(\omega_{\rm TS} x^{(k)} + B|x^{(k)}| - \omega_{\rm TS} x^{(k-1)} + c),
\end{equation*}
where $\omega_{\rm TS}$ is a given parameter and $x^{(1)} = x^{(0)}$.

  \item \textbf{NSNA}: the non-monotone smoothing Newton algorithm \cite[Algorithm~$1$]{cyhm2024} with the same parameters used in \cite{cyhm2024}.
\end{description}

For discrete iteration methods, all methods will be run ten times and the average IT (the number of iterations), the average CPU (the elapsed CPU time in seconds) and the average RRES are reported,  where
$$
{\rm RRES}:=\frac{\|c + B|x^{(k)}| - Ax^{(k)}\|}{\|c\|}.
$$
We set $x^{(0)}=[-1,0,-1,\cdots,-1,0]^\top\in\mathbb{R}^n$ and $y^{(0)}=c$ (if needed) for Example \ref{exam2}. If RRES $\leq10^{-8}$, then the tested methods are terminated.

\begin{example}[{\cite[Example 5.1]{lich2025}}]\label{exam2}{\rm
Let $A=\tilde{A}+\dfrac{1}{5}I$ and
$$B=
\left[
  \begin{array}{cccccccccccc}
    S_2 & -I & -I &-I &-I & 0 & 0 &0 & 0 & 0 & 0 & 0 \\
    -I & S_2 & -I &-I &-I & -I & 0 &0 & 0 & 0 & 0 & 0\\
    -I & -I & S_2 &-I &-I & -I & -I &0 & 0 & 0 & 0 & 0\\
    -I & -I & -I &S_2 &-I & -I & -I &-I & 0 & 0 & 0 & 0\\
    -I & -I & -I &-I &S_2 & -I & -I &-I & -I & 0 & 0 & 0\\
    0 & -I & -I &-I &-I & S_2 & -I &-I & -I & -I & 0 & 0\\
    \ddots&\ddots&\ddots&\ddots&\ddots&\ddots&\ddots
    &\ddots&\ddots&\ddots&\ddots&\ddots\\
    0 & 0&0&-I & -I &-I &-I & S_2 & -I &-I & -I & -I \\
    0 & 0&0&0&-I & -I &-I &-I & S_2 & -I &-I & -I  \\
    0 & 0&0&0&0&-I & -I &-I &-I & S_2 & -I &-I   \\
    0 & 0&0&0&0&0&-I & -I &-I &-I & S_2 & -I   \\
    0 & 0&0&0&0&0&0&-I & -I &-I &-I & S_2
  \end{array}
\right]
\in\mathbb{R}^{n\times n},$$
where	
$$
\tilde{A}=\scriptsize
\left[
  \begin{array}{cccccccccccc}
    S_1 & -1.5I & -0.5I &-1.5I &-0.5I & 0 & 0 &0 & 0 & 0 & 0 & 0 \\
    -1.5I & S_1 & -1.5I &-0.5I &-1.5I & -0.5I & 0 &0 & 0 & 0 & 0 & 0\\
    -0.5I & -1.5I & S_1 &-1.5I &-0.5I & -1.5I & -0.5I &0 & 0 & 0 & 0 & 0\\
    -1.5I & -0.5I & -1.5I &S_1 &-1.5I & -0.5I & -1.5I &-0.5I & 0 & 0 & 0 & 0\\
    -0.5I & -1.5I & -0.5I &-1.5I &S_1 & -1.5I & -0.5I &-1.5I & -0.5I & 0 & 0 & 0\\
    0 & -0.5I & -1.5I &-0.5I &-1.5I & S_1 & -1.5I &-0.5I & -1.5I & -0.5I & 0 & 0\\
    \ddots&\ddots&\ddots&\ddots&\ddots&\ddots&\ddots
    &\ddots&\ddots&\ddots&\ddots&\ddots\\
    0 & 0&0&-0.5I & -1.5I &-0.5I &-1.5I & S_1 & -1.5I &-0.5I & -1.5I & -0.5I \\
    0 & 0&0&0&-0.5I & -1.5I &-0.5I &-1.5I & S_1 & -1.5I &-0.5I & -1.5I  \\
    0 & 0&0&0&0&-0.5I & -1.5I &-0.5I &-1.5I & S_1 & -1.5I &-0.5I   \\
    0 & 0&0&0&0&0&-0.5I & -1.5I &-0.5I &-1.5I & S_1 & -1.5I   \\
    0 & 0&0&0&0&0&0&-0.5I & -1.5I &-0.5I &-1.5I & S_1
  \end{array}
\right],
$$
$$
S_1=
\left[
  \begin{array}{cccccccccc}
    36 & -1.5 & -0.5 &-1.5 & 0 &0 & 0 & 0 & 0 & 0 \\
    -1.5 & 36 & -1.5 &-0.5 &-1.5 &  0 &0 & 0 & 0 & 0 \\
    -0.5 & -1.5 & 36 &-1.5 &-0.5 & -1.5 & 0 & 0 & 0 & 0\\
    -1.5 & -0.5 & -1.5 &36 &-1.5 & -0.5 & -1.5 & 0 & 0 & 0\\
    0 & -1.5 & -0.5 &-1.5 &36 & -1.5 & -0.5 &-1.5 & 0 & 0\\
    0 & 0 & -1.5 &-0.5 &-1.5 & 36 & -1.5 &-0.5 & -1.5  & 0\\
    \ddots&\ddots&\ddots&\ddots&\ddots&\ddots&\ddots
    &\ddots&\ddots&\ddots\\
     0&0&0&0 & -1.5 &-0.5 &-1.5 & 36 & -1.5 &-0.5   \\
     0&0&0&0&0& -1.5 &-0.5 &-1.5 &36 & -1.5   \\
     0&0&0&0&0&0& -1.5 &-0.5 &-1.5 & 36
  \end{array}
\right]
\in\mathbb{R}^{m\times m},$$
$$S_2=
\left[
  \begin{array}{cccccccccc}
    3 & -1 & -1 &-1 & 0 &0 & 0 & 0 & 0 & 0 \\
    -1 & 3 & -1 &-1 &-1 &  0 &0 & 0 & 0 & 0 \\
    -1 & -1 & 3 &-1 &-1 & -1 & 0 & 0 & 0 & 0\\
    -1 & -1 & -1 &3 &-1 & -1 & -1 & 0 & 0 & 0\\
    0 & -1 & -1 &-1 &3 & -1 & -1 &-1 & 0 & 0\\
    0 & 0 & -1 &-1 &-1 & 3 & -1 &-1 & -1  & 0\\
    \ddots&\ddots&\ddots&\ddots&\ddots&\ddots&\ddots
    &\ddots&\ddots&\ddots\\
     0&0&0&0 & -1 &-1 &-1 & 3 & -1 &-1   \\
     0&0&0&0&0& -1 &-1 &-1 &3 & -1   \\
     0&0&0&0&0&0& -1 &-1 &-1 & 3
  \end{array}
\right]
\in\mathbb{R}^{m\times m},$$
and $c = Ax_*-B|x_*|$ with $x_*=[\dfrac{1}{2},1,\dfrac{1}{2},1,\dots,\dfrac{1}{2},1]^\top\in\mathbb{R}^{n}$ and $n = m^2$.

For the MN and SSMN methods, we consider two cases of~$\Omega$, i.e., $\Omega=1.5 D_A$ and $\Omega=2 D_A$, where $D_{A}$ is the diagonal part of $A$. For the SOR and FPI methods, $\omega_{\rm opt}$ is the experimentally found optimum value from $[0:0.01:2]$, where the MATLAB-like notation is used. Numerical results for Example~\ref{exam2} are reported in Table~\ref{table4}.

\setlength{\tabcolsep}{5.0pt}
\begin{table}[!htp]\scriptsize
\centering
\caption{Numerical results for Example~\ref{exam2}.}\label{table4}
\begin{tabular}{ccccccc}\hline
			Method  & $m$                            & $50$              & $60$              & $70$               & $80$              & $90$             \\ \hline
			\multirow{4}{*}{Method~\eqref{eq:disc}}&  $\eta=10^{-8}$  &                   &                    &                   &   &             \\
			        &  IT                            & $49$              & $50$              & $51$               & $52$              & $53$              \\
			        &  CPU                           & $\textbf{0.0320}$ & $\textbf{0.0361}$ & $\textbf{0.0404}$  & $\textbf{0.0677}$ & $\textbf{0.0790}$ \\
			        &  RRES                          & 3.0080e-09        & 5.8967e-09        & 6.5229e-09         & 5.2690e-09        & 3.3138e-09        \\\hline
			        \multirow{3}{*}{GN}&  IT         & $2$               &  $2$              & $2$                & $2$               & $2$               \\
			        &  CPU                           & $0.0519$          & $0.0589$          &  $0.0652$          &  $0.1114$         & $0.1314$         \\
			        & RRES                           & 6.0327e-16        & 6.4290e-16        & 6.5112e-16         & 7.0425e-16        & 7.2291e-16        \\\hline
			        \multirow{3}{*}{Picard}&  IT     & $26$              & $26$              & $26$               & $26$              &  $26$            \\
			        &   CPU                          &$0.3043$           & $0.3380$          &  $0.4000$          & $0.6714$          & $0.7996$         \\
			        &  RRES                          &5.9279e-09         & 6.9693e-09        & 7.7214e-09         & 8.2848e-09        & 8.7217e-09       \\\hline
			\multirow{8}{*}{MN}      & $\Omega = 2 D_A$     &                   &                   &                    &                   &     \\
			        &   IT                           &$47$               & $47$              & $47$               & $47$              &  $47$            \\
			        &   CPU                          &$0.5637$           & $0.6024$          &  $0.7503$          &  $1.4232$         & $1.5318$         \\
			        &  RRES                          &7.8392e-09         & 7.5124e-09        & 7.2750e-09         & 7.0945e-09        & 6.9526e-09       \\
              & $\Omega = 1.5 D_A$ &                   &                   &                    &                   &     \\
			        &   IT                           &$36$               & $36$              & $36$               & $36$              &  $36$            \\
			        &   CPU                          &$0.3009$           & $0.4518$          &  $0.5487$          &  $0.8025$         & $1.1731$         \\
			        &  RRES                          &9.0862e-09         & 8.5306e-09        & 8.1245e-09         & 7.8139e-09        & 7.5684e-09       \\\hline
       \multirow{8}{*}{SSMN}      & $\Omega= 2 D_A$                   &                   &                   &                    &                   & \\
			        &  IT                            & $18$              &  $18$             & $18$               & $18$              &  $18$            \\
			        &  CPU                           &$0.2482$           & $0.2391$          &  $0.2879$          &  $0.5043$         & $0.5802$          \\
			        & RRES                           &5.4771e-09         & 5.0798e-09        & 4.7858e-09         & 4.5585e-09        & 4.3772e-09       \\
              & $\Omega = 1.5 D_A$ &                   &                   &                    &                   &     \\
			        &   IT                           &$12$               & $12$              & $12$               & $12$              &  $12$            \\
			        &   CPU                          &$0.0968$           & $0.1561$          &  $0.1889$          &  $0.2708$         & $0.3827$         \\
			        &  RRES                          &8.0784e-09         & 7.2010e-09        & 6.5528e-09         & 6.0510e-09        & 5.6488e-09       \\\hline
           \multirow{4}{*}{FPI}      &  $\omega_{\rm opt}$    & $0.8$      & $0.8$             & $0.8$             & $0.8$            & $0.79$            \\
			        &  IT                            & $17$              & $17$              & $17$               & $17$              &  $17$            \\
			        &   CPU                          &$0.2262$           & $0.2075$          &  $0.2761$          &  $0.4285$         & $0.5709$         \\
			        &   RRES                         &9.7959e-09         & 9.2742e-09        & 8.8437e-09         & 8.4833e-09        & 9.7848e-09       \\\hline
           \multirow{4}{*}{MFPI}      &  $\omega_{\rm opt}$    & $0.79$      & $0.79$             & $0.79$             & $0.79$            & $0.79$            \\
			        &  IT                            & $22$              & $22$              & $22$               & $22$              &  $22$            \\
			        &   CPU                          &$0.2827$           & $0.2814$          &  $0.3616$          &  $0.5534$         & $0.7526$         \\
			        &   RRES                         &9.3122e-09         & 8.8271e-09        & 8.3838e-09         & 7.9891e-09        & 7.6394e-09       \\\hline
            \multirow{4}{*}{SOR}    & $\omega_{\rm opt}$                 & $0.9$            &$0.9$              &$0.9$             &$0.9$             &$0.9$   \\
			        &  IT                            &$16$               &  $16$             & $16$               & $16$              &  $16$            \\
			        &  CPU                           &$0.2185$           & $0.2025$          &  $0.2568$          &  $0.3862$         & $0.5223$          \\
			        &   RRES                         &9.0688e-09         & 8.4911e-09        & 7.9995e-09          & 7.5790e-09        & 7.2158e-09       \\\hline
            \multirow{4}{*}{TS}    & $\omega_{\rm opt}$                 & $0.8$            &$0.81$              &$0.79$             &$0.8$             &$0.8$   \\
			        &  IT                            &$15$               &  $15$             & $16$               & $16$              &  $16$            \\
			        &  CPU                           &$0.2656$           & $0.1902$          &  $0.2653$          &  $0.4077$         & $0.5169$          \\
			        &   RRES                         &7.6822e-09         & 9.5523e-09        & 9.5487e-09         & 6.9912e-09        & 7.9659e-09       \\\hline
			        \multirow{3}{*}{NSNA}&  IT       & $3$               &  $3$              & $3$                & $3$               & $3$              \\
			        &  CPU                           &$0.6152$           & $0.8677$          &  $1.2606$          &  $2.9163$         & $3.3705$          \\
			        & RRES                           &1.6817e-12         & 3.4639e-12        & 6.3858e-12         & 1.0853e-11        & 1.7333e-11       \\\hline
\end{tabular}
\end{table}

 As shown in Table \ref{table4}, all of the tested methods converge, among which iteration
 method~\eqref{eq:disc} requires the least CPU time. The main reason may be that it is inverse-free.

 }
\end{example}

\section{Conclusion}\label{sec5:conclusion}
A fixed-time inverse-free dynamic model for solving~{\rm GAVE}~\eqref{eq:gave} is presented. Under mild conditions, we proved that the unique equilibrium point of the proposed model is equivalent to the unique solution of~{\rm GAVE}~\eqref{eq:gave}. Theoretical results show that the proposed method globally converges to the unique solution of GAVE and has a conservative settling-time. For AVE~\eqref{eq:ave}, comparing with the existing fixed-time inverse-free dynamic model, the proposed method obtain a tighter upper bound of the settling-time. Furthermore, it is shown that the forward-Euler discretization of the proposed dynamic system results in an explicit $(T,\epsilon)$-close discrete-time approximation scheme. Numerical results demonstrate our claims.


\begin{thebibliography}{99}


\bibitem{bai2010}
Z.-Z. Bai. Modulus-based matrix splitting iteration methods for linear complementarity problems, \emph{Numer. Linear Algebra Appl.}, 2010, 17: 917--933.


\bibitem{cyyh2021}
C.-R. Chen, Y.-N. Yang, D.-M. Yu, D.-R. Han. An inverse-free dynamical system for solving the absolute value equations, \emph{Appl. Numer. Math.},  2021, 168: 170--181.


\bibitem{cyhm2024}
C.-R. Chen, D.-M. Yu, D.-R. Han, C.-F. Ma. A non-monotone smoothing Newton algorithm for solving the system of generalized absolute value equations, \emph{J. Comput. Math.}, 2025, 43: 438--460.


\bibitem{cops1992}
R. W. Cottle, J.-S. Pang, R. E. Stone. The Linear Complementarity Problem, \emph{SIAM}, Philadelphia, 2009.

\bibitem{doji2009}
J.-L. Dong, M.-Q. Jiang. A modified modulus method for symmetric positive-definite linear complementarity problems, \emph{Numer. Linear Algebra Appl.}, 2009, 16: 129--143.






\bibitem{dnhk2023}
T. M. Dang, T. D. Nguyen, T. Hoang, H. Kim, A. B. J. Teoh, D. Choi. AVET: A novel transform fucntion to improve cancellable biometrics security, \emph{IEEE Trans. Inf. Foren. Sec.}, 2023, 18: 758--772.



\bibitem{gawa2014}
X.-B. Gao, J. Wang. Analysis and application of a one-layer neural network for solving horizontal linear complementarity problems, \emph{Int. J. Comput. Int. Sys}, 2014, 7: 724--732.




\bibitem{gbgb2022}
K. Garg, M. Baranwal, R. Gupta, M. Benosman. Fixed-time stable proximal dynamical system for solving MVIPs, \emph{IEEE Trans. Autom. Control}, 2022, 68: 5029--5036.


\bibitem{hatz2009}
A. Hadjidimos, M. Tzoumas. Nonstationary extrapolated modulus algorithms for the solution of the linear complementarity problem, \emph{Linear Algebra Appl.}, 2009, 431: 197--210.


\bibitem{huhu2010}
S.-L. Hu, Z.-H. Huang. A note on absolute value equations, \emph{Optim. Lett.}, 2010, 4: 417--424.

\bibitem{hucu2017}
X.-J. Huang, B.-T. Cui. Neural network-based method for solving absolute value equations, \emph{ICIC-EL}, 2017, 11: 853--861.

\bibitem{huhz2011}
S.-L. Hu, Z.-H. Huang, Q. Zhang. A generalized Newton method for absolute value equations associated with second order cones, \emph{J. Comput. Appl. Math.}, 2011, 235: 1490--1501.




\bibitem{jlhh2022}
X.-X. Ju, C.-D. Li, X. Han, X. He. Neurodynamic network for absolute value equations: A fixed-time convergence technique, \emph{IEEE Trans. Circuits Syst. II Express Briefs}, 2022, 69: 1807--1811.




\bibitem{jyfc2023}
X.-X. Ju, X.-S. Yang, G. Feng, H.-J. Che. Neurodynamic optimization approaches with finite/fixed-time convergence for absolute value equations, \emph{Neural Networks}, 2023, 165: 971--981.

\bibitem{keyf2020}
Y.-F. Ke. The new iteration algorithm for absolute value equation, \emph{Appl. Math. Lett.}, 2020, 99: 105990.
\bibitem{kema2017}
Y.-F. Ke, C.-F. Ma. SOR-like iteration method for solving absolute value equations, \emph{Appl. Math. Comput.}, 2017, 311: 195--202.

\bibitem{khalil1996}
H. K. Khalil. Nonlinear Systems, \emph{Prentice-Hall}, Michigan, NJ, 1996.




\bibitem{lich2025}
X.-H. Li, C.-R. Chen. An efficient Newton-type matrix splitting algorithm for solving generalized absolute value equations with application to ridge regression
problems, \emph{J. Comput. Appl. Math.}, 2025, 457: 116329.

\bibitem{lich2025b}
X.-H. Li, C.-R. Chen, D.-R. Han. A generalization of the relaxation-based matrix splitting iterative method for solving the system of generalized absolute value
equations, 2025, \emph{arXiv preprint}, arXiv:2312.10891.

\bibitem{lild2022}
X. Li, Y.-X. Li, Y. Dou. Shift-splitting fixed point iteration method for solving generalized absolute value equations, \emph{Numer. Algorithms}, 2023, 93: 695--710.

\bibitem{liyi2021}
X. Li, X.-X. Yin. A new modified Newton-type iteration method for solving generalized absolute value equations, 2021, \emph{arXiv preprint}, arXiv:2103.09452.

\bibitem{lyyh2023}
X.-H. Li, D.-M. Yu, Y.-N. Yang, D.-R. Han, C.-R. Chen. A new fixed-time dynamical system for absolute value equations, \emph{Numer. Math. Theor. Meth. Appl.}, 2023, 16: 622--633.

\bibitem{liwu2025}
C.-X. Li, S.-L. Wu. An application of generalized absolute value equation, \emph{J. Nonlinear Math. Phys.}, 2025, 32: 17.



\bibitem{mang2009a}
O. L Mangasarian. A generalized Newton method for absolute value equations, \emph{Optim. Lett.}, 2009, 3: 101--108.


\bibitem{mang2007a}
O. L. Mangasarian. Absolute value programming, \emph{Comput. Optim. Appl.}, 2007, 36: 43--53.




\bibitem{mame2006}
O. L. Mangasarian, R. R. Meyer. Absolute value equations, \emph{Linear Algebra Appl.}, 2006, 419: 359--367.


\bibitem{maer2018}
A. Mansoori, M. Erfanian. A dynamic model to solve the absolute value equations, \emph{J. Comput. Appl. Math.}, 2018, 333: 28--35.

\bibitem{maee2017}
A. Mansoori, E. Eshaghnezhad, S. Effati. An efficient neural network model for solving the absolute value equations, \emph{IEEE Trans. Circuits Syst. II Express Briefs}, 2018, 65: 391--395.


\bibitem{mezz2020}
F. Mezzadri. On the solution of general absolute value equations, \emph{Appl. Math. Lett.}, 2020, 107: 106462.

\bibitem{murt1988}
K. G. Murty. Linear Complementarity, Linear and Nonlinear Programming, \emph{Heldermann}, Berlin, 1988.



\bibitem{poly2012}
A. Polyakov. Nonlinear feedback design for fixed-time stabilization of linear control systems, \emph{IEEE Trans. Autom. Control}, 2012, 57: 2106--2110.

\bibitem{poeb2019}
A. Polyakov, D. Efimov, B. Brogliato. Consistent discretization of finite-time and fixed-time stable systems, \emph{SIAM. J. Contol Optim.}, 2019, 57: 78--103.

\bibitem{prok2009}
O. Prokopyev.  On equivalent reformulations for absolute value equations, \emph{Comput. Optim. Appl.}, 2009, 44: 363--372.


\bibitem{rohn1989}
J. Rohn. Systems of linear interval equations, \emph{Linear Algebra Appl.}, 1989, 126: 39--78.

\bibitem{rohn2014}
J. Rohn, V. Hooshyarbakhsh, R. Farhadsefat. An iterative method for solving absolute value equations and sufficient conditions for unique solvability, \emph{Optim. Lett.}, 2014, 8: 35--44.


\bibitem{sate2010}
R.-G. Sanfelice, A.-R. Teel. Dynamical properties of hybrid systems simulators, \emph{Automatica}, 2010, 46: 239--248.


\bibitem{bsnc2019}
B. Saheya, C. T. Nguyen, J. S. Chen. Neural network based on systematically generated smoothing functions for absolute value equation, \emph{ J. Appl. Math. Comput.}, 2019, 61: 533--558.



\bibitem{bokh1980}
M. M. G. Van Bohkoven. Piecewise-linear Modelling and Analysis, \emph{Proefschrift}, Eindhoven, 1981.

\bibitem{wacc2019}
A. Wang, Y. Cao, J. X. Chen. Modified Newton-type iteration methods for generalized absolute value equations, \emph{J. Optim. Theory Appl.}, 2019, 181: 216--230.

\bibitem{wuli2020}
S.-L. Wu, C.-X. Li. A note on unique solvability of the absolute value equation, \emph{Optim. Lett.}, 2020, 14: 1957--1960.


\bibitem{xiqh2024}
J.-X. Xie, H.-D. Qi, D.-R. Han. Randomized iterative methods for generalized absolute value equations: Solvability and error bounds, \emph{SIAM J. Optim.}, 2025, 35: 1731--1760.



\bibitem{yuch2023}
 D.-M. Yu, C.-R. Chen. A new stability analysis on a neural network method for linear complementarity problems, \emph{Pac. J. Optim.}, 2023, 19: 363--372.



\bibitem{ycyh2023}
D.-M. Yu, C.-R. Chen, Y.-N. Yang, D.-R. Han. An inertial inverse-free dynamical system for solving absolute value equations, \emph{J. Ind. Manag. Optim.}, 2023, 19: 2549--2559.




\bibitem{yzch2024}
D.-M. Yu, G.-H. Zhang, C.-R. Chen, D.-R. Han. The neural network models with delays for solving absolute value equations, \emph{Neurocomputing}, 2024, 589: 127707.



\bibitem{zhbr2025}
S.-Q. Zhang, M. Benosman, O. Romero. On the convergence of Euler discretization of finite-time convergent gradient flows, 2025, \emph{arXiv preprint}, 	arXiv:2010.02990.

\bibitem{zlzh2024}
X. Zhang, C.-L. Li, L.-C. Zhang, Y.-L. Hu, Z. Peng. Convergence-accelerated fixed-time dynamical methods for absolute value equations, \emph{J. Optim. Theory Appl.}, 2024, 203: 600--628.

\bibitem{zzlf2022}
H.-B. Zhang, Y.-J. Zhang, Y.-J. Li, H.-T. Fan. An improved two-sweep iteration method for absolute value equations, \emph{Comput. Appl. Math.}, 2022, 41: 122.

\bibitem{zhyi2013}
N. Zheng, J.-F. Yin. Accelerated modulus-based matrix splitting iteration methods for linear complementarity problem, \emph{Numer. Algor.}, 2013, 64: 245--262.


\end{thebibliography}
\end{document}